%\usepackage[initials]{amsrefs}
%\usepackage{fancyhdr}
% Without this, the content would not fit in the page
% \addtolength{\oddsidemargin}{-2.5cm}
% \addtolength{\evensidemargin}{-2.5cm}
%\date{\today}

\documentclass[11pt]{amsart}
%%%%%%%%%%%%%%%%%%%%%%%%%%%%%%%%%%%%%%%%%%%%%%%%%%%%%%%%%%%%%%%%%%%%%%%%%%%%%%%%%%%%%%%%%%%%%%%%%%%%%%%%%%%%%%%%%%%%%%%%%%%%%%%%%%%%%%%%%%%%%%%%%%%%%%%%%%%%%%%%%%%%%%%%%%%%%%%%%%%%%%%%%%%%%%%%%%%%%%%%%%%%%%%%%%%%%%%%%%%%%%%%%%%%%%%%%%%%%%%%%%%%%%%%%%%%
\usepackage{amsmath}
\usepackage{amssymb}
\usepackage{graphicx}
\usepackage{eurosym}
\usepackage{amsfonts}
\usepackage{xcolor}
\usepackage{mathtools}

\setcounter{MaxMatrixCols}{10}
%TCIDATA{OutputFilter=LATEX.DLL}
%TCIDATA{Version=5.50.0.2953}
%TCIDATA{<META NAME="SaveForMode" CONTENT="1">}
%TCIDATA{BibliographyScheme=Manual}
%TCIDATA{LastRevised=Saturday, September 14, 2024 20:45:26}
%TCIDATA{<META NAME="GraphicsSave" CONTENT="32">}
%TCIDATA{Language=American English}

\textwidth=33truepc
\textheight=50truepc
\paperheight=287mm
\paperwidth=212mm
\newtheorem{theorem}{Theorem}[section]

\newtheorem{lemma}[theorem]{Lemma}

\theoremstyle{definition}

\newtheorem{remark}[theorem]{Remark}
\DeclareMathOperator{\LLX}{\mathcal{L}(X)}
\DeclareMathOperator{\LL}{\mathcal{L}}

 \numberwithin{equation}{section}
\email{{r.precup@ictp.acad.ro}} \email{andrei.stan@ubbcluj.ro}
\keywords{evolution equation; control; fixed point; semigroup}
\subjclass[2010]{47J35, 34K35, 47H10}

\begin{document}
\title[On equilibrium in control problems]{On equilibrium in control
problems with applications to evolution systems}
\author[R. Precup]{Radu Precup}
\address[R. Precup]{Faculty of Mathematics and Computer Science and
Institute of Advanced Studies in Science and Technology, Babe\c{s}-Bolyai
University, 400084 Cluj-Napoca, Romania \& Tiberiu Popoviciu Institute of
Numerical Analysis, Romanian Academy, P.O. Box 68-1, 400110 Cluj-Napoca,
Romania}
\author[A. Stan]{Andrei Stan}
\address[A. Stan]{A. Stan, Department of Mathematics, Babe\c{s}-Bolyai
University, 400084 Cluj-Napoca, Romania \& Tiberiu Popoviciu Institute of
Numerical Analysis, Romanian Academy, P.O. Box 68-1, 400110 Cluj-Napoca,
Romania}

\begin{abstract}
In this paper we examine a mutual control problem for systems of two abstract
evolution equations subject to a proportionality final condition. Related
observability and semi-observability problems are discussed. The analysis employs a vector fixed-point approach, using matrices rather than constants, and applies the technique of Bielecki equivalent norms.
\end{abstract}

\maketitle

\section{Introduction}

In a recent paper \cite{ps3}, we introduced the concept of \textit{ mutual
control} related to systems whose unknowns exert some control over each
other. More exactly, we have considered four sets $D_{1}$, $D_{2}$ , $C_{1}$%
, $C_{2}$ with $C_{1}\subset D_{1}\times D_{2}$, $C_{2}\subset D_{2}\times
D_{1}$, a linear space $Z,$ two mappings $E_{1}$, $E_{2}:D_{1}\times
D_{2}\rightarrow Z,$ and the problem 
\begin{equation*}
\begin{cases}
E_{1}(x,y)=0_{Z} \\ 
E_{2}(x,y)=0_{Z} \\ 
(x,y)\in C_{1},(y,x)\in C_{2}.%
\end{cases}%
\end{equation*}%
In this context, $y$ controls the state $x$ in the first equation, and $x$
controls the state $y$ in the second. The controllability conditions on $x$
and $y$ are expressed by the appartenence to the sets $C_{1}$ and $C_{2}$,
respectively. We call this problem the mutual control problem.

One way to solve such a problem, is to incorporate the controllability
conditions into the equations and give the problem a fixed point
formulation 
\begin{equation*}
\left( x,y\right) \in \left( N_{1}(x,y),\ N_{2}(x,y)\right) ,
\end{equation*}
where $N_{1},N_{2}$ are set-valued mappings $N_{1}:D_{1}\times
D_{2}\rightarrow D_{1},$ $N_{2}:D_{1}\times D_{2}\rightarrow D_{2}.$ A
solution $\left( x,y\right) $ of this fixed point equation is said to be a 
\textit{solution} of the mutual control problem, while the problem is said
to be \textit{mutually controllable} if such a solution exists.

Also, in \cite{ps3}, we showed that the concept of mutual controllability is
related to the notion of equilibrium in general, and includes in particular
the concept of a Nash equilibrium (\cite{bg,park,p1,p2,ps,ps2,s,s2,s3}).

In this paper, we illustrate the notion of a mutual control by studying a
system of abstract evolution equations 
\begin{equation}
\begin{cases}
x^{\prime }(t)=A\,x(t)+F\left( x(t),y(t)\right) \\ 
y^{\prime }(t)=A\,y(t)+G\left( x(t),y(t)\right)%
\end{cases}
t\in \lbrack 0,T],  \label{pb abstracta}
\end{equation}%
together with the controllability condition 
\begin{equation}
x\left( T\right) -ax(0)=k\left( y\left( T\right) -by\left( 0\right) \right) .
\label{control condition}
\end{equation}%
Here, $a,b,k>0$ are given numbers, $A$ is a linear operator generating a
semigroup of operators, and $F,G:X^{2}\rightarrow X$ are continuous mappings.

We note that the controllability condition \eqref{control condition} under
consideration is non-standard. Unlike the conventional requirement {}{}\cite%
{c}, \cite{z}, where the final values $x\left( T\right) $ and $y\left(
T\right) $ of the two unknowns are specified, it instead demands a
proportional relationship between their deviations from the initial states $%
x\left( 0\right) $ and $y\left( 0\right) ,$ respectively.

Such a control condition is of interest in dynamics of populations when it
expresses the requirement that at a certain moment $T$ the ratio between
proliferations of two populations, for example prey and predators, should be
the desired $k.$ Similarly, for the control of epidemics, it expresses the
requirement that at some time one reach a certain ratio between the infected
population and that susceptible to infection. Analogous interpretations can
be given in the case of some chemical or medical reaction models.

\ Our results target two aspects:

\begin{description}
\item[a)] The problem as an \textit{observability} one, consisting in
determination of the initial states $x\left( 0\right) $ and $y\left(
0\right) $ from the observable final relations;

\item[b)] A \textit{semi-observability }problem, that is, finding a solution
of the mutual problem when only one of the initial states is given,
consequently leading to the determination of the other initial state.
\end{description}

We solve the problem (\ref{pb abstracta})-(\ref{control condition}) using an equivalent formulation as a fixed-point equation. This approach allows for the application of vector techniques based on fixed-point theorems, Bielecki-type norms, and matrices instead of constants. The advantages of each fixed-point method are emphasized, considering specific assumptions about \( F \) and \( G \) to ensure  uniqueness and localization of the solutions.

\section{Preliminaries}

Let $\left( X,|\cdot |_{X}\right) $ be a Banach space, and let $\LL(X)$ be
the set of all bounded linear operators from $X$ to $X$. Endowed with the
norm 
\begin{equation*}
|U|_{\mathcal{L}(X)}=\sup_{x\in X\setminus \{0\}}\frac{|Ux|_{X}}{|x|_{X}},
\end{equation*}%
$\LLX$ is a Banach space.

\subsection{Abstract evolution equations}

Let $T>0$ and $A\colon D(A)\subset X\rightarrow X$ be the generator of a $%
C_{0}$-semigroup $\{S(t)\,:\,t\geq 0\}$.

A function $u\in C\left( [0,T];X\right) $ is said to be a \textit{mild} 
\textit{solution} of the equation 
\begin{equation*}
u^{\prime }(t)=Au(t)+F(t,u(t))\ \ \ \,(t\in \left[ 0,T\right] ),
\end{equation*}%
if it satisfies 
\begin{equation*}
u\left( t\right) =S\left( t\right) u(0)+\int_{0}^{t}S\left( t-s\right)
F\left( s,u\left( s\right) \right) ds,\,\text{ for all $t\in \lbrack 0,T]$.}
\end{equation*}

Throughout this paper, the number $C_{A}$ stands for upper bound for the
norm in $\mathcal{L}(X)$ of the operators $S(t)$, uniform with respect to $%
t\in \left[ 0,2T\right] $, that is 
\begin{equation}
\left\vert S(t)\right\vert _{\mathcal{L}(X)}\leq C_{A},\text{ }
\label{marginire semigrup}
\end{equation}%
for all $t\in \left[ 0,2T\right] .$ If $A$ generates a semigroup of
contractions, then $C_{A}=1.$

For details about semigroups of linear operators and abstract evolution
equations we refer to the books \cite{mr} and \cite{v}.

\subsection{Bielecki-type norms}

For each number $\theta \geq 0$, on the space $C([0,T];X)$,  we define the 
\textit{Bielecki norm} 
\begin{equation*}
|u|_{\theta }:=\max_{t\in \lbrack 0,T]}e^{-\theta t}|u(t)|_{X}.
\end{equation*}%
Note that when $\theta =0,$ the Bieleki norm $\left\vert \cdot \right\vert _{0}$ is
equivalent to the usual Chebyshev (uniform) norm $\left\vert \cdot \right\vert
_{\infty }$. We mention that with a suitable choice of $\theta $, the Bielecki-type norms
allow us to relax the strict conditions on constants such as Lipschitz or
growth conditions required by fixed point theorems.

\subsection{Matrices convergent to zero}

Dealing with systems of equations it is convenient (see, e.g., \cite{pmcz,
pcarte}) to use a vector approach based on matrices instead of constants.

A square matrix $M\in \mathcal{M}_{n\times n}\left( \mathbb{R}_{+}\right) $
is said to be \textit{convergent to zero} if its power $M^{k}$ tends to the
zero matrix as $k\rightarrow \infty $. The next lemma provides equivalent
conditions for a square matrix to be convergent to zero (see, e.g., \cite%
{pmcz}).

\begin{lemma}
\label{caracterizare} Let $M\in \mathcal{M}_{n\times n}\left( \mathbb{R}%
_{+}\right) $ be a square matrix. The following statements are equivalent:
\end{lemma}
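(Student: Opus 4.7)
The plan is to prove the standard characterization of nonnegative matrices convergent to zero, namely the equivalence of the conditions $M^k\to 0$, $\rho(M)<1$ (spectral radius strictly less than one), and $I-M$ being nonsingular with nonnegative inverse given by the Neumann series $\sum_{k\geq 0}M^{k}$. I will establish this via a cyclic chain of implications, exploiting the nonnegativity of $M$ at the crucial steps.

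First I would show that $M^{k}\to 0$ is equivalent to $\rho(M)<1$. The easy direction is: if $M^{k}\to 0$ and $\lambda$ is an eigenvalue with eigenvector $v\neq 0$, then $M^{k}v=\lambda^{k}v\to 0$ forces $|\lambda|<1$. For the converse I would invoke either the Jordan canonical form of $M$ (where each Jordan block $J_{\lambda}$ satisfies $J_{\lambda}^{k}\to 0$ once $|\lambda|<1$, since the entries are polynomials in $k$ times $\lambda^{k}$) or equivalently the Gelfand spectral radius formula $\rho(M)=\lim_{k\to\infty}\|M^{k}\|^{1/k}$, which gives $\|M^{k}\|\leq (\rho(M)+\varepsilon)^{k}\to 0$ for small enough $\varepsilon>0$.

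Next I would connect these to the Neumann-series condition. Assuming $\rho(M)<1$, the series $\sum_{k\geq 0}M^{k}$ is absolutely convergent in any matrix norm, its sum satisfies $(I-M)\sum_{k\geq 0}M^{k}=I$, so $I-M$ is invertible with $(I-M)^{-1}=\sum_{k\geq 0}M^{k}$; since $M\in\mathcal{M}_{n\times n}(\mathbb{R}_{+})$, every partial sum has nonnegative entries, and hence so does $(I-M)^{-1}$. Conversely, if $I-M$ is nonsingular with nonnegative inverse, I would use the Perron--Frobenius framework: the spectral radius $\rho(M)$ is itself an eigenvalue of $M$ with a nonnegative eigenvector $v$, so $Mv=\rho(M)v$ yields $(I-M)v=(1-\rho(M))v$; applying $(I-M)^{-1}\geq 0$ and $v\geq 0$, if $\rho(M)\geq 1$ one derives a contradiction with the nonnegativity and nonsingularity of $(I-M)^{-1}$, forcing $\rho(M)<1$.

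The main obstacle is the last implication, where the naive Neumann expansion is not available a priori; it genuinely requires the Perron--Frobenius theorem (existence of a nonnegative eigenvector for the spectral radius of a nonnegative matrix) to pass from the algebraic invertibility condition back to a spectral bound. The remaining steps (Jordan form / Gelfand, and the Neumann-series computation) are routine. Since the lemma is stated as a well-known classical fact with the reference \cite{pmcz}, I would structure the proof efficiently by citing the Perron--Frobenius result rather than re-proving it, and explicitly verify only those implications that use the nonnegativity of $M$ in an essential way.
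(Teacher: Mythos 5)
Your proposal is mathematically sound, but note that the paper does not prove this lemma at all: it is quoted as a known characterization of matrices convergent to zero, with a pointer to the reference \cite{pmcz}, so there is no in-paper argument to compare against. Your cycle of implications is the standard one and each step works: $M^{k}\to 0 \Leftrightarrow \rho(M)<1$ via the eigenvector test and Jordan form (or Gelfand), $\rho(M)<1\Rightarrow$ (c) via the Neumann series, whose partial sums are nonnegative because $M\geq 0$, and (c) $\Rightarrow \rho(M)<1$ via Perron--Frobenius: with $Mv=\rho(M)v$, $v\geq 0$, $v\neq 0$, nonsingularity of $I-M$ rules out $\rho(M)=1$, and if $\rho(M)>1$ then $v=(1-\rho(M))(I-M)^{-1}v$ has a strictly negative entry, contradicting $v\geq 0$. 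One remark worth making: the appeal to Perron--Frobenius in the last implication can be avoided entirely. From $(I-M)^{-1}\geq 0$ and $M\geq 0$ one gets, for the partial sums $S_{k}=I+M+\cdots+M^{k}$, the identity $(I-M)S_{k}=I-M^{k+1}$, hence $S_{k}=(I-M)^{-1}\left(I-M^{k+1}\right)\leq (I-M)^{-1}$ entrywise; since the entries of $S_{k}$ are nondecreasing and bounded, the series converges and therefore $M^{k}\to 0$, giving (c) $\Rightarrow$ (a) directly. This variant is more elementary and closes the cycle without any spectral theory for nonnegative matrices, which you may prefer if you want the lemma self-contained.
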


\begin{description}
\item[(a)] The matrix $M$ is convergent to zero.

\item[(b)] The spectral radius of $M$ is less than $1$, i.e., $\rho (M)<1$.

\item[(c)] The matrix $I-M,$ where $I$ is the unit matrix of the same size,
is invertible and its inverse has nonnegative entries, i.e., $\left(
I-M\right) ^{-1}\in \mathcal{M}_{n\times n}\left( \mathbb{R}_{+}\right) .$
\end{description}

In case $n=2$, we have the following characterization.

\begin{lemma}
\label{lema2 matrici convergente la zero} A square matrix $M=[a_{ij}]_{1\leq
i,j\leq 2}\in \mathcal{M}_{2\times 2}\left( \mathbb{R}_{+}\right) $ is
convergent to zero if and only if $\ $%
\begin{equation*}
a_{11},\ \ a_{22}<1
\end{equation*}%
and 
\begin{equation*}
\text{tr}(M)<1+\text{det}(M),\,\ \text{i.e.,}\,\
a_{11}+a_{22}<1+a_{11}a_{22}-a_{12}a_{21}.
\end{equation*}
\end{lemma}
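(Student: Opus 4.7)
The plan is to apply Lemma~\ref{caracterizare} and reduce the equivalence to the condition $\rho(M)<1$. Since $M$ is $2\times 2$, its eigenvalues $\lambda_{1},\lambda_{2}$ are the roots of the characteristic polynomial
\[
p(\lambda)=\lambda^{2}-\text{tr}(M)\,\lambda+\det(M),
\]
and the whole argument will revolve around estimating these roots in terms of $\text{tr}(M)$ and $\det(M)$.

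For the necessity direction, assume $M$ is convergent to zero, i.e.\ $M^{k}\to 0$. Since all entries are nonnegative, an induction shows that the diagonal entries of $M^{k}$ are bounded below by $a_{ii}^{k}$, which forces $a_{11},a_{22}<1$. To obtain the trace/determinant inequality, I would evaluate
\[
p(1)=1-\text{tr}(M)+\det(M)=(1-\lambda_{1})(1-\lambda_{2}),
\]
interpreted as $|1-\lambda|^{2}$ when the eigenvalues form a complex-conjugate pair. In either case $|\lambda_{i}|<1$ guarantees $p(1)>0$, which is precisely $\text{tr}(M)<1+\det(M)$.

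For the sufficiency direction, assume $a_{11},a_{22}<1$ and $\text{tr}(M)<1+\det(M)$. If the discriminant $\text{tr}(M)^{2}-4\det(M)$ is negative, then $|\lambda|^{2}=\det(M)=a_{11}a_{22}-a_{12}a_{21}\leq a_{11}a_{22}<1$, giving $\rho(M)<1$. If the eigenvalues are real, then from $\text{tr}(M)\geq 0$ it follows that the larger root $\lambda_{1}=\tfrac{1}{2}\bigl(\text{tr}(M)+\sqrt{\text{tr}(M)^{2}-4\det(M)}\bigr)$ is nonnegative and dominates $|\lambda_{2}|$ (since $\lambda_{1}+\lambda_{2}\geq 0$ implies $\lambda_{1}\geq -\lambda_{2}$), so $\rho(M)=\lambda_{1}$. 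Using $\text{tr}(M)<2$ (a direct consequence of $a_{11},a_{22}<1$), the inequality $\lambda_{1}<1$ is equivalent to $\sqrt{\text{tr}(M)^{2}-4\det(M)}<2-\text{tr}(M)$, and squaring and simplifying gives exactly $\text{tr}(M)<1+\det(M)$.

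The main subtlety is the bookkeeping in the sufficiency part: one must treat the complex and real cases separately and, in the real case, recognize that $\rho(M)$ is attained at the positive eigenvalue thanks to the nonnegativity of $\text{tr}(M)$. Beyond this case split, the proof is purely elementary algebra on a quadratic polynomial; no Perron--Frobenius machinery is strictly required for $n=2$, although it provides convenient intuition.
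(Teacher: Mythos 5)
Your argument is correct. Note, however, that the paper does not prove this lemma at all: it is stated as a known characterization (the companion result, Lemma~\ref{caracterizare}, is quoted from \cite{pmcz}), so there is no in-paper proof to compare against; your proposal simply supplies the standard elementary argument. Your route — reduce to $\rho(M)<1$ via Lemma~\ref{caracterizare}, then analyze the quadratic $p(\lambda)=\lambda^{2}-\mathrm{tr}(M)\lambda+\det(M)$ — is sound in both directions: the bound $(M^{k})_{ii}\ge a_{ii}^{k}$, valid by nonnegativity of the entries, correctly forces $a_{11},a_{22}<1$ (and this condition is genuinely not implied by $\mathrm{tr}(M)<1+\det(M)$ alone, as the diagonal matrix with entries $2$ and $3/2$ shows, so it is right that you establish it separately); the sign of $p(1)=(1-\lambda_{1})(1-\lambda_{2})$ gives the trace--determinant inequality; and in the sufficiency part the complex case is handled by $|\lambda|^{2}=\det(M)\le a_{11}a_{22}<1$, while in the real case the squaring step $\sqrt{\mathrm{tr}(M)^{2}-4\det(M)}<2-\mathrm{tr}(M)$ is legitimate because $\mathrm{tr}(M)<2$ makes the right-hand side positive and the discriminant is nonnegative there. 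One could shorten the real/complex bookkeeping by invoking Perron--Frobenius (for a nonnegative matrix $\rho(M)$ is itself a real nonnegative eigenvalue, so $\rho(M)<1$ is equivalent to $p(1)>0$ together with $\mathrm{tr}(M)<2$), but your elementary case split is perfectly adequate.
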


In the next Section 3, we use the matrix 
\begin{equation*}
M(\theta )=%
\begin{bmatrix}
a_{11} & a_{12}\frac{e^{\theta T}-1}{\theta } \\[3pt] 
a_{21} & a_{22}\frac{1-e^{-\theta T}}{\theta }%
\end{bmatrix}%
,
\end{equation*}%
where $\theta \geq 0,$ and we aim to find $\theta $ such that $M(\theta )$
is convergent to zero. Here, $a_{ij}\,(i,j=1,2)$ are nonnegative numbers
with $\ a_{11}<1$ and $a_{22}<\frac{1}{T}.$ Notice that the last inequality
guarantees 
\begin{equation*}
a_{22}\frac{1-e^{-\theta T}}{\theta }<1
\end{equation*}%
since $\frac{1-e^{-\theta T}}{\theta }\leq T$ for every $\theta \geq 0.$

The next lemma proved in \cite{ps3} deals with the existence of a $\theta\geq 0 $
for which matrix $M\left( \theta \right) $ is convergent to zero. It shows
us that there can be values {}{}of $\theta >0$ for which the matrix $M\left(
\theta \right) $ is convergent to zero, although the matrix $M(0)$
corresponding to the Chebyshev norm is not convergent to zero.

From Lemma \ref{lema2 matrici convergente la zero}, the matrix $M(\theta )$
is convergent to zero if and only if $h(\theta )<0$, where 
\begin{align*}
h(\theta )& =\text{tr}(M(\theta ))-1-\text{det}(M(\theta )) \\
& =a_{11}+a_{22}\frac{1-e^{-\theta T}}{\theta }-1-a_{11}a_{22}\frac{%
1-e^{-\theta T}}{\theta }+a_{12}a_{21}\frac{e^{\theta T}-1}{\theta }\ \
\left( \theta \geq 0\right) .
\end{align*}

\begin{lemma}
Assume $0 \leq a_{11} < 1$ and $0 < a_{22} < \frac{1}{T}$.

\begin{itemize}
\item[(i)] If $h(0)<0$, then $M(0)$ converges to zero.

\item[(ii)] If $h(0)\geq 0$, then there exists $\theta _{1}>0$ with $%
h^{\prime }(\theta _{1})=0$ and

\begin{itemize}
\item[(a)] if $h(\theta _{1})<0$, then the matrix $M(\theta )$ converges to
zero for every $\theta $ between the zeroes of $h$ and does not converge to
zero otherwise;

\item[(b)] if $h(\theta _{1})\geq 0$, then there are no $\theta $ such that $%
M(\theta )$ converges to zero.
\end{itemize}
\end{itemize}
\end{lemma}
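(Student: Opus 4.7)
The plan is to exploit an integral representation of $h$ to expose its strict convexity, and then read off the sign structure directly. Using
\begin{equation*}
\frac{1-e^{-\theta T}}{\theta}=\int_{0}^{T}e^{-\theta s}\,ds,\qquad \frac{e^{\theta T}-1}{\theta}=\int_{0}^{T}e^{\theta s}\,ds
\end{equation*}
(both interpreted as $T$ at $\theta=0$), one rewrites
\begin{equation*}
h(\theta)=(a_{11}-1)+a_{22}(1-a_{11})\int_{0}^{T}e^{-\theta s}\,ds+a_{12}a_{21}\int_{0}^{T}e^{\theta s}\,ds.
\end{equation*}
Differentiation under the integral sign shows $h$ is $C^{\infty}$ on $[0,\infty)$ and $h''(\theta)\geq a_{22}(1-a_{11})\int_{0}^{T}s^{2}e^{-\theta s}\,ds>0$, since $a_{11}<1$ and $a_{22}>0$. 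Hence $h$ is strictly convex and $h'$ is strictly increasing. Part (i) is then immediate: the inequality $h(0)<0$ is exactly $\text{tr}(M(0))<1+\text{det}(M(0))$, and the other bounds $a_{11}<1$ and $a_{22}T<1$ hold by hypothesis, so Lemma \ref{lema2 matrici convergente la zero} applies.

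For part (ii), assume $h(0)\geq 0$. First I would note that this forces $a_{12}a_{21}>0$; otherwise $h(0)=(1-a_{11})(a_{22}T-1)<0$, a contradiction. The integral representation then gives $h(\theta)\to+\infty$ as $\theta\to\infty$. By strict convexity, $h$ attains a unique global minimum on $[0,\infty)$, at some interior point $\theta_{1}>0$ with $h'(\theta_{1})=0$ in the nontrivial regime $h'(0)<0$; the degenerate case $h'(0)\geq 0$ makes $h$ strictly increasing on $[0,\infty)$ and thus $h\geq h(0)\geq 0$ everywhere, which already puts us in the conclusion of (b). Assuming $\theta_{1}>0$ exists, the dichotomy follows from standard convexity: if $h(\theta_{1})<0$, the intermediate value theorem applied on $[0,\theta_{1}]$ and $[\theta_{1},\infty)$ yields exactly two zeros $\theta_{a}<\theta_{1}<\theta_{b}$ of $h$, with $h<0$ precisely on $(\theta_{a},\theta_{b})$; if $h(\theta_{1})\geq 0$, then $h\geq 0$ throughout. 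Lemma \ref{lema2 matrici convergente la zero} then translates these sign patterns into the stated convergence behaviour.

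The main delicate point I anticipate is not any calculation but this boundary case: the lemma as phrased guarantees an interior critical point $\theta_{1}>0$ whenever $h(0)\geq 0$, but strictly speaking this needs $h'(0)<0$, which in turn depends on whether $a_{22}T$ exceeds $1/2$ and so is not implied by $h(0)\geq 0$ alone. The clean fix is the observation above that when $h'(0)\geq 0$, monotonicity already gives the conclusion of (b), so the existence argument for $\theta_{1}$ is only genuinely needed in the regime where (a) could occur. Everything else is routine convex-function bookkeeping combined with Lemma \ref{lema2 matrici convergente la zero}.
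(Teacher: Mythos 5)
Your proof is correct, and in fact it cannot be compared line-by-line with an in-paper argument: the paper does not prove this lemma at all, it only cites the submitted work \cite{ps3}, so your integral-representation argument is a genuinely self-contained substitute. Writing $\frac{1-e^{-\theta T}}{\theta}=\int_0^T e^{-\theta s}ds$ and $\frac{e^{\theta T}-1}{\theta}=\int_0^T e^{\theta s}ds$ and concluding $h''>0$ from $a_{22}(1-a_{11})>0$ is clean and immediately yields the sign dichotomy via strict convexity, the forced positivity of $a_{12}a_{21}$ when $h(0)\geq 0$, and the translation through Lemma \ref{lema2 matrici convergente la zero} (using, as the paper notes, that $a_{22}\frac{1-e^{-\theta T}}{\theta}<1$ automatically). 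Your ``delicate point'' is moreover a real one and not a defect of your proof but of the statement as quoted: since $h'(0)=\frac{T^2}{2}\bigl(a_{12}a_{21}-a_{22}(1-a_{11})\bigr)$, the hypothesis $h(0)\geq 0$ does not force $h'(0)<0$ (take $a_{11}=0$, $T=1$, $a_{22}=\tfrac12$, $a_{12}a_{21}=\tfrac35$, where $h(0)=\tfrac1{10}>0$ and $h'(0)=\tfrac1{20}>0$), so the critical point $\theta_1$ asserted in (ii) need not exist; your observation that in this regime $h'$ is positive, $h\geq h(0)\geq 0$, and the conclusion of (b) holds anyway is exactly the right repair, and it leaves the convergence conclusions of the lemma intact while showing the existence claim for $\theta_1$ implicitly requires $a_{12}a_{21}<a_{22}(1-a_{11})$.
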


\subsection{Fixed point theorems}

Next, we recall two fixed point theorems, which, along with the well-known Schauder fixed point theorem, will play a key role in our analysis. The first result is Perov's fixed point theorem (see, e.g., \cite{pcarte}) for mappings on the Cartesian product of two metric spaces.

\begin{theorem}[Perov]
Let $\left( X_{i},d_{i}\right) ,$ $i=1,2$ be complete metric spaces and $%
N_{i}:X_{1}\times X_{2}\rightarrow X_{i}$ be two mappings for which there
exists a square matrix $M$ of size two with nonnegative entries and the
spectral radius $\rho \left( M\right) <1$ such that the following vector
inequality 
\begin{equation*}
\left( 
\begin{array}{c}
d_{1}\left( N_{1}\left( x,y\right) ,N_{1}\left( u,v\right) \right) \\ 
d_{2}\left( N_{2}\left( x,y\right) ,N_{2}\left( u,v\right) \right)%
\end{array}%
\right) \leq M\left( 
\begin{array}{c}
d_{1}\left( x,u\right) \\ 
d_{2}\left( y,v\right)%
\end{array}%
\right)
\end{equation*}%
holds for all $\left( x,y\right) ,\left( u,v\right) \in X_{1}\times X_{2}.$
Then, there exists a unique point $\left( x,y\right) \in X_{1}\times X_{2}$
with 
\begin{equation*}
(x,y)=\left( N_{1}\left( x,y\right) ,N_{2}\left( x,y\right) \right) .
\end{equation*}
\end{theorem}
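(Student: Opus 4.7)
The plan is to adapt the Banach contraction principle to the vector-valued setting. First, I would package the two components into a single map $N \colon X_1 \times X_2 \to X_1 \times X_2$, $N(x,y) = (N_1(x,y), N_2(x,y))$, and encode the pair of distances as a column vector $D((x,y),(u,v)) = (d_1(x,u), d_2(y,v))^\top \in \mathbb{R}_+^2$. The hypothesis then reads $D(N(x,y),N(u,v)) \le M\,D((x,y),(u,v))$ componentwise.

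Next, starting from an arbitrary $(x_0,y_0)$, I would build the orbit $(x_n,y_n) = N(x_{n-1},y_{n-1})$ and iterate the Lipschitz inequality. Because $M$ has nonnegative entries, the componentwise order is preserved under multiplication by $M$, and so
\[ D((x_{n+1},y_{n+1}),(x_n,y_n)) \le M^n v_0, \qquad v_0 := D((x_1,y_1),(x_0,y_0)). \]
A componentwise triangle inequality followed by summation then gives, for $m > n$, the estimate $D((x_m,y_m),(x_n,y_n)) \le \bigl(M^n + M^{n+1} + \cdots + M^{m-1}\bigr) v_0$.

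The key step is to control this tail. Since $\rho(M) < 1$, Lemma \ref{caracterizare} yields $M^k \to 0$ and guarantees that the Neumann-type series $\sum_{k \ge 0} M^k$ converges to $(I-M)^{-1} \in \mathcal{M}_{2 \times 2}(\mathbb{R}_+)$; hence the tail tends to zero componentwise, and both $(x_n)$ and $(y_n)$ are Cauchy in their respective spaces. Completeness then produces limits $x^*, y^*$, and the Lipschitz estimate (which implies continuity of $N_1, N_2$) shows that $(x^*, y^*)$ is a fixed point of $N$. Uniqueness is immediate: if $(\tilde x, \tilde y)$ is another fixed point, then iterating the hypothesis gives $D((x^*, y^*),(\tilde x, \tilde y)) \le M^k D((x^*, y^*),(\tilde x, \tilde y)) \to 0$, forcing equality.

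I do not anticipate a serious obstacle; the one conceptual point to handle carefully is that iterating and summing vector inequalities requires the nonnegativity of the entries of $M$ to preserve the order, and this is precisely the algebraic content that Lemma \ref{caracterizare} (equivalences among $\rho(M)<1$, $M^k \to 0$, and $(I-M)^{-1} \ge 0$) makes available. Everything else is a direct transcription of the classical scalar argument with the geometric ratio $q$ replaced by the matrix $M$.
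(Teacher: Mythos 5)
Your argument is correct: it is the standard vector-valued Banach iteration proof of Perov's theorem, and since the paper only recalls this result without proof (citing \cite{pcarte}), there is nothing to contrast it with. The only cosmetic point is that Lemma \ref{caracterizare} does not literally assert convergence of the Neumann series; but this follows at once from the identity $(I-M)\left(I+M+\cdots+M^{n}\right)=I-M^{n+1}$ together with $M^{n}\to 0$ and the invertibility of $I-M$, or you can simply bound the tail by $M^{n}(I-M)^{-1}$, so your proof stands as written.
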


The second result is Avramescu's fixed point theorem \cite{a}.

\begin{theorem}[Avramescu]
Let $D_{1}$ be a closed convex subset of a normed space $Y$, $(D_{2},d)$ a
complete metric space, and let $N_{i}:D_{1}\times D_{2}\rightarrow D_{i}$, $%
i=1,2$ be continuous mappings. Assume that the following conditions are
satisfied:

\begin{enumerate}
\item[(a)] $N_{1}(D_{1}\times D_{2})$ is a relatively compact subset of $Y$;

\item[(b)] There is a constant $L\in \lbrack 0,1)$ such that 
\begin{equation*}
d(N_{2}(x,y),N_{2}(x,y^{\prime }))\leq L\,d(y,y^{\prime })
\end{equation*}%
for all $x\in D_{1}$ and $y,y^{\prime }\in D_{2}$.
\end{enumerate}

Then, there exists $(x,y)\in D_{1}\times D_{2}$ such that 
\begin{equation*}
N_{1}(x,y)=x,\quad N_{2}(x,y)=y.
\end{equation*}
\end{theorem}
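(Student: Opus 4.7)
The plan is to combine the Banach contraction principle (in the $y$-variable) with Schauder's fixed point theorem (in the $x$-variable), in the spirit of Krasnoselskii-type arguments for sums of a contraction and a compact operator.

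First, I would exploit hypothesis (b): for each fixed $x \in D_{1}$, the map $y \mapsto N_{2}(x,y)$ is an $L$-contraction on the complete metric space $(D_{2},d)$. By Banach's fixed point theorem, it admits a unique fixed point, which I would denote $\varphi(x) \in D_{2}$, characterized by $N_{2}(x,\varphi(x)) = \varphi(x)$. This associates to every $x$ a unique $y$-component solving the second equation, reducing the coupled system to a single-variable problem for $x$.

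Next, I would establish that $\varphi : D_{1} \to D_{2}$ is continuous. The standard trick is: given $x_{n} \to x$ in $D_{1}$, apply the triangle inequality to split
\begin{equation*}
d(\varphi(x_{n}),\varphi(x)) \leq d(N_{2}(x_{n},\varphi(x_{n})),N_{2}(x_{n},\varphi(x))) + d(N_{2}(x_{n},\varphi(x)),N_{2}(x,\varphi(x))),
\end{equation*}
bound the first term by $L\,d(\varphi(x_{n}),\varphi(x))$ using (b), and absorb it into the left-hand side to obtain $(1-L)\,d(\varphi(x_{n}),\varphi(x)) \leq d(N_{2}(x_{n},\varphi(x)),N_{2}(x,\varphi(x)))$, which tends to zero by continuity of $N_{2}$ in the first variable.

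Finally, I would define $T : D_{1} \to D_{1}$ by $T(x) = N_{1}(x,\varphi(x))$. Continuity of $T$ follows from continuity of $N_{1}$ and of $\varphi$. Since $T(D_{1}) \subset N_{1}(D_{1} \times D_{2})$, hypothesis (a) guarantees that $T(D_{1})$ is relatively compact in $Y$. As $D_{1}$ is a closed convex subset of the normed space $Y$, Schauder's fixed point theorem applies and yields $x^{*} \in D_{1}$ with $T(x^{*}) = x^{*}$. Setting $y^{*} := \varphi(x^{*})$, we immediately obtain both $N_{1}(x^{*},y^{*}) = x^{*}$ and, by definition of $\varphi$, $N_{2}(x^{*},y^{*}) = y^{*}$, as required.

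The only genuinely delicate step is the continuity of $\varphi$; the rest is a routine chaining of the contraction principle and Schauder. One should also note that Schauder is being used in its slightly more general form allowing self-maps whose range (not the whole domain) is relatively compact — but this is standard.
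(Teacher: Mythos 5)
Your proof is correct: the paper itself states Avramescu's theorem without proof (citing the original reference), and your argument — Banach's principle in $y$ giving the map $\varphi$, the standard $(1-L)$-absorption estimate for its continuity, then Schauder applied to $x\mapsto N_{1}(x,\varphi(x))$ on the closed convex set $D_{1}$ with range contained in the compact set $\overline{N_{1}(D_{1}\times D_{2})}\subset D_{1}$ — is precisely the classical hybrid contraction--compactness argument behind this result. The one delicate point you flag (Schauder for a continuous self-map of a closed convex subset of a normed space whose range is relatively compact) is indeed standard and correctly invoked.
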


Some reference works in fixed point theory are the books \cite{d}, \cite{g}
and \cite{kr}.

\section{Mutual control for abstract evolution equations}

In this section, we aim to find a mild solution of the control problem %
\eqref{pb abstracta}-(\ref{control condition}). More exactly, we look for $%
x,y\in C\left( \left[ 0,T\right] ;X\right) $ such that, for all $t\in
\lbrack 0,T]$, the following relations hold 
\begin{equation}\label{el}
\begin{cases}
x\left( t\right) =S\left( t\right) x\left( 0\right) +\int_{0}^{t}S\left(
t-s\right) F\left( x\left( s\right) ,y\left( s\right) \right) ds \\ 
y\left( t\right) =S\left( t\right) y\left( 0\right) +\int_{0}^{t}S\left(
t-s\right) G\left( x\left( s\right) ,y\left( s\right) \right) ds,%
\end{cases}
\end{equation}%
\begin{equation}
x\left( T\right) -ax\left( 0\right) =k\left( y\left( T\right) -by\left(
0\right) \right) .  \label{3.1'}
\end{equation}

\subsection{Fixed point formulation.}

Our first aim is to combine the controllability condition (\ref{3.1'}) with (%
\ref{el}) into a single system that takes the form of a fixed-point
equation. Assume $\left( x,y\right) $ is a solution of the mutual control
problem. Then, we have 
\begin{align*}
& x\left( T\right) -ax\left( 0\right)  =S\left( T\right) x\left( 0\right)
-ax\left( 0\right) +\int_{0}^{T}S\left( T-s\right) F\left( x\left( s\right)
,y\left( s\right) \right) ds,   \\&
y\left( T\right) -by\left( 0\right) =S\left( T\right) y\left( 0\right)
-by\left( 0\right) +\int_{0}^{T}S\left( T-s\right) G\left( x\left( s\right)
,y\left( s\right) \right) ds,\notag
\end{align*}%
and using (\ref{3.1'}) yields 
\begin{align}
0 &=S\left( T\right) x\left( 0\right) -kS\left( T\right) y\left( 0\right)
-ax\left( 0\right) +kby\left( 0\right)   \label{eq1} \\
&\quad +\int_{0}^{T}S\left( T-s\right) \left( F\left( x\left( s\right) ,y\left(
s\right) \right) -kG\left( x\left( s\right) ,y\left( s\right) \right)
\right) ds.  \notag
\end{align}%
When we express $x\left( 0\right) $ and $y\left( 0\right) $ from $\left( \ref%
{eq1}\right) $ and replace them in (\ref{el}), we obtain%
\begin{equation}  \label{s2}
    \begin{cases}
x\left( t\right)  =N_{1}\left( x,y\right) \left( t\right)  \\
y\left( t\right)  =N_{2}\left( x,y\right) \left( t\right) ,
\end{cases}
\end{equation}%
where $N_{1},N_{2}\colon C\left( \left[ 0,T\right] ;X\right) ^{2}\rightarrow
C\left( \left[ 0,T\right] ;X\right) ^{2}$ are given by%
\begin{align*}
N_1(x,y)(t) &= \frac{1}{a} S(t) \left[ S(T)x(0) - kS(T)y(0) + kb y(0) \right] \\
&\quad + \frac{1}{a} \int_0^T S(T-s) \left( F(x(s), y(s)) - kG(x(s), y(s)) \right) ds \\
&\quad + \int_0^t S(t-s) F(x(s), y(s)) ds, \\
N_2(x,y)(t) &= \frac{1}{kb} S(t) \left[ -S(T)x(0) + kS(T)y(0) + a x(0) \right] \\
&\quad - \frac{1}{kb} \int_0^T S(T-s) \left( F(x(s), y(s)) - kG(x(s), y(s)) \right) ds \\
&\quad + \int_0^t S(t-s) G(x(s), y(s)) ds.
\end{align*}
Thus, the pair $\left( x,y\right) $ is a fixed point of the operator $\left(
N_{1},N_{2}\right) $. In the following lemma, we show that the converse also
holds, i.e., any fixed point of the operator $\left( N_{1},N_{2}\right) $
satisfies both $\left( \ref{el}\right) $ and $\left( \ref{3.1'}\right) $.

\begin{lemma}
Any fixed point of the operator $(N_1,N_2)$ satisfy  $\left( \ref{el}\right) $ and $\left( \ref{3.1'}\right) $.
\end{lemma}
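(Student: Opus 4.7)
The plan is to reverse the derivation that produced $N_1$ and $N_2$. Fix a pair $(x,y)$ satisfying $x = N_1(x,y)$ and $y = N_2(x,y)$. Since these operators were built by substituting the compatibility relation (\ref{eq1}) into the mild-solution form (\ref{el}), the natural strategy is first to show that the fixed-point property recovers both (\ref{eq1}) and (\ref{el}), and then to deduce (\ref{3.1'}) from these.

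The first step is to evaluate the two fixed-point identities at $t = 0$. Since $S(0) = I_X$ and the integral $\int_0^0\,ds$ vanishes, both right-hand sides reduce to purely initial-data expressions in $x(0)$ and $y(0)$, and a direct rearrangement shows that each of the two identities is equivalent to (\ref{eq1}). In particular the two conditions coincide, so no overdetermination arises; the fixed-point property at $t=0$ already encodes the single compatibility relation used to construct $N_1$ and $N_2$.

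With (\ref{eq1}) in hand, the second step is to feed it back into the definitions of $N_1(x,y)(t)$ and $N_2(x,y)(t)$ in order to eliminate the bracketed initial-value combinations and the auxiliary $\int_0^T S(T-s)(F - kG)\,ds$ contribution. What remains on each right-hand side is exactly the mild-solution representation
\[
x(t) = S(t)x(0) + \int_0^t S(t-s)F(x(s),y(s))\,ds
\]
together with the analogous identity for $y$, which is precisely (\ref{el}).

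Finally, with (\ref{el}) established, I would evaluate the mild-solution identities at $t = T$ and compute the combination $x(T) - ax(0) - k(y(T) - by(0))$; after expansion this is exactly the right-hand side of (\ref{eq1}), which vanishes by the first step, giving (\ref{3.1'}). There is no real obstacle in the argument: it is essentially the reverse of the computation carried out just before the lemma, and the only conceptual observation needed is that $N_1, N_2$ were constructed precisely so that the $t=0$ evaluation forces (\ref{eq1}) while the remaining $t$-dependence collapses into the mild-solution form.
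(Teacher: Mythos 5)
Your proposal is correct and follows essentially the same route as the paper: evaluating the fixed-point identities at $t=0$ recovers the compatibility relation (\ref{eq1}), which, substituted back into the definitions of $N_{1},N_{2}$, yields the mild-solution form (\ref{el}). The only (harmless) difference is the order of the last steps: the paper obtains (\ref{3.1'}) by setting $t=T$ directly in the fixed-point equations and comparing with (\ref{star}), whereas you first recover (\ref{el}) and then read off (\ref{3.1'}) at $t=T$ using (\ref{eq1}), which is a slightly shorter computation of the same identity.
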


\begin{proof}
Assume that $\left( x,y\right) $ is a solution of (\ref{s2}). Setting $t=0$,
we obtain%
\begin{eqnarray*}
ax\left( 0\right)  &=&S\left( T\right) x\left( 0\right) -kS\left( T\right)
y\left( 0\right) +kby\left( 0\right)  \\
&&+\int_{0}^{T}S\left( T-s\right) \left( F\left( x\left( s\right) ,y\left(
s\right) \right) -kG\left( x\left( s\right) ,y\left( s\right) \right)
\right) ds, \\
kby\left( 0\right)  &=&-S\left( T\right) x\left( 0\right) +kS\left( T\right)
y\left( 0\right) +ax\left( 0\right)  \\
&&-\int_{0}^{T}S\left( T-s\right) \left( F\left( x\left( s\right) ,y\left(
s\right) \right) -kG\left( x\left( s\right) ,y\left( s\right) \right)
\right) ds,
\end{eqnarray*}%
whence%
\begin{align*}
ax\left( 0\right) -kby\left( 0\right)  &=2S\left( T\right) x\left( 0\right)
-2kS\left( T\right) y\left( 0\right) -\left( ax\left( 0\right) -kby\left(
0\right) \right)  \\&
\quad+2\int_{0}^{T}S\left( T-s\right) \left( F\left( x\left( s\right) ,y\left(
s\right) \right) -kG\left( x\left( s\right) ,y\left( s\right) \right)
\right) ds,
\end{align*}%
that is%
\begin{align} \label{star}
 ax\left( 0\right) -kby\left( 0\right)  &=S\left( T\right) x\left( 0\right)
-kS\left( T\right) y\left( 0\right)   \\&
\quad +\int_{0}^{T}S\left( T-s\right) \left( F\left( x\left( s\right) ,y\left(
s\right) \right) -kG\left( x\left( s\right) ,y\left( s\right) \right)
\right) ds.  \notag
\end{align}%
Letting $t=T$ in (\ref{s2}) gives%
\begin{align*}
x\left( T\right) &= \frac{1}{a} \left[ S\left( T\right) \left( S\left( T\right) x\left( 0\right) - k S\left( T\right) y\left( 0\right) + k b y\left( 0\right) \right. \right] \\
&\quad   +  \frac{1}{a} S\left( T\right) \int_{0}^{T} S\left( T - s\right) \left( F\left( x\left( s\right), y\left( s\right) \right) - k G\left( x\left( s\right), y\left( s\right) \right) \right) ds  \\
&\quad + \int_{0}^{T} S\left( T - s\right) F\left( x\left( s\right), y\left( s\right) \right) ds,
\end{align*}
and
\begin{align*}
k y\left( T\right) &= \frac{1}{b} \left[ S\left( T\right) \left( -S\left( T\right) x\left( 0\right) + k S\left( T\right) y\left( 0\right) + a x\left( 0\right) \right. \right] \\
&\quad -  \frac{1}{b} S\left( T\right)\int_{0}^{T} S\left( T - s\right) \left( F\left( x\left( s\right), y\left( s\right) \right) - k G\left( x\left( s\right), y\left( s\right) \right) \right) ds  \\
&\quad + k \int_{0}^{T} S\left( T - s\right) G\left( x\left( s\right), y\left( s\right) \right) ds.
\end{align*}
Whence%
\begin{align*}
x\left( T\right) - ky\left( T\right) &= \frac{1}{a} S\left( T\right) \big [ S\left( T\right) x\left( 0\right) - k S\left( T\right) y\left( 0\right) + k k b y\left( 0\right) \\
&\quad + \int_{0}^{T} S\left( T - s\right) \left( F\left( x\left( s\right), y\left( s\right) \right) - k G\left( x\left( s\right), y\left( s\right) \right) \right) ds \big ] \\
&\quad + \int_{0}^{T} S\left( T - s\right) F\left( x\left( s\right), y\left( s\right) \right) ds \\
&\quad + \frac{1}{b} S\left( T\right) \big[ S\left( T\right) x\left( 0\right) - k S\left( T\right) y\left( 0\right) - a x\left( 0\right)  \\
&\quad  + \int_{0}^{T} S\left( T - s\right) \left( F\left( x\left( s\right), y\left( s\right) \right) - k G\left( x\left( s\right), y\left( s\right) \right) \right) ds \big] \\
&\quad - k \int_{0}^{T} S\left( T - s\right) G\left( x\left( s\right), y\left( s\right) \right) ds.
\end{align*}
Since, by (\ref{eq1}), the expressions within the square brackets equal \(ax\left(0\right)\) and \(-kby\left(0\right)\), respectively, we deduce:
\begin{align}
x\left( T\right) -ky\left( T\right)  &=S\left( T\right) x\left( 0\right)
-kS\left( T\right) y\left( 0\right)   \label{star2} \\
&+\int_{0}^{T}S\left( T-s\right) \left( F\left( x\left( s\right) ,y\left(
s\right) \right) -kG\left( x\left( s\right) ,y\left( s\right) \right)
\right) ds.  \notag
\end{align}%
Now (\ref{star}) and (\ref{star2}) imply 
\begin{equation*}
ax\left( 0\right) -kby\left( 0\right) =x\left( T\right) -ky\left( T\right),
\end{equation*}%
and so the controllability condition is satisfied.

Finally, (\ref{star}) and (\ref{s2}) imply that $\left( x,y\right) $
satisfies (\ref{el}).
\end{proof}

% \begin{remark}
% If in the definitions of the operators $N_{1}$ and $N_{2}$, we replace $x(0)$
% and $y(0)$ with fixed values, say $\alpha $ and $\beta $ respectively, the
% lemma still holds. However, this change does not guarantee that the fixed
% point $\left( x^{\ast },y^{\ast }\right) $ of the operator $\left(
% N_{1},N_{2}\right) $ will satisfy $x^{\ast }(0)=\alpha $ and $y^{\ast
% }(0)=\beta $. Indeed, the mutual control problem will be solved but with the
% corresponding initial values $x^{\ast }\left( 0\right) $ and $y^{\ast
% }\left( 0\right) .$
% \end{remark}

\subsection{Existence}

Using Perov's and Leray-Schauder's fixed point theorems, we obtain the
following existence result.

\begin{theorem}
\label{teorema cu Perov} Assume that the following conditions are satisfied:

\begin{itemize}
\item[(i) ] There are constants $a_{F},b_{F},a_{G},b_{G}\geq 0$ such that 
\begin{align}
& |F(x,y)-F(\overline{x},\overline{y})|_{X}\leq a_{F}|x-\overline{x}%
|_{X}+b_{F}|y-\overline{y}|_{X}, \\
& |G(x,y)-G(\overline{x},\overline{y})|_{X}\leq a_{G}|x-\overline{x}%
|_{X}+b_{G}|y-\overline{y}|_{X},  \notag
\end{align}%
for all $x,\overline{x},y,\overline{y}\in X$ and $t\in \left[ 0,T\right] ;$

\item[(ii)] The map $H:=F-kG$ is bounded on $X^{2};$

\item[(iii)] The matrix 
\begin{equation}
M:=TC_{A}\left[ 
\begin{array}{ll}
\frac{1}{a}C_{A}\left( a_{F}+ka_{G}\right) +a_{F} & \frac{1}{a}C_{A}\left(
b_{F}+kb_{G}\right) +b_{F}\smallskip \\ 
\frac{1}{kb}C_{A}\left( a_{F}+ka_{G}\right) +a_{G} & \frac{1}{kb}C_{A}\left(
b_{F}+kb_{G}\right) +b_{G}%
\end{array}%
\right]  \label{r0}
\end{equation}%
is convergent to zero;

\item[(iv)] Semigroup $\left\{ S\left( t\right) ;\ t\geq 0\right\} $ is
compact and $\left\vert S\left( T\right) \right\vert _{\mathcal{L}\left(
X\right) }<\frac{2}{\frac{1}{a}+\frac{1}{b}}.$
\end{itemize}

Then problem \emph{(\ref{el})-(\ref{3.1'})} has at least one solution $%
\left( x,y\right) $ in $C\left( \left[ 0,T\right] ;X\right) ^{2}.$
\end{theorem}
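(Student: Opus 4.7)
The plan is to formulate the problem as the fixed point equation $(x,y) = N(x,y)$ for $N := (N_1,N_2)$ on $C([0,T];X)^2$ and apply the Leray--Schauder fixed point theorem. Two ingredients are needed: complete continuity of $N$, and a uniform a priori bound on the solutions of the homotopy equation $(x,y) = \lambda N(x,y)$ for $\lambda \in (0,1)$. Complete continuity follows from the continuity of $F$ and $G$ together with the compactness of the semigroup assumed in (iv): indeed, when $S(t)$ is compact, the convolution integrals of the form $\int_0^t S(t-s)(\cdot)(s)\,ds$ send bounded subsets of $C([0,T];X)$ into relatively compact sets, and the evaluation-type terms $t \mapsto S(t)x(0)$ are compact as well.

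\emph{A priori bound on initial values.} Let $(x,y) = \lambda N(x,y)$ with $\lambda \in (0,1)$, and set $p := x(0)$, $q := y(0)$. Evaluating at $t = 0$ and first adding the two resulting identities yields $(1-\lambda)(ap + kbq) = 0$, so $q = -ap/(kb)$; subtracting and substituting gives
\[
B_\lambda\, p = \lambda \int_0^T S(T-s)\, H(x,y)(s)\,ds, \qquad B_\lambda := (1+\lambda)\,a\,I - \lambda\,\tfrac{a+b}{b}\,S(T).
\]
Since $\lambda/(1+\lambda) \le 1/2$ on $[0,1]$, hypothesis (iv), namely $|S(T)|_{\mathcal{L}(X)} < 2ab/(a+b)$, yields the uniform estimate $\tfrac{\lambda(a+b)}{(1+\lambda)ab}|S(T)|_{\mathcal{L}(X)} < 1$, so a Neumann series argument shows $B_\lambda$ is invertible with $|B_\lambda^{-1}|_{\mathcal{L}(X)}$ bounded uniformly in $\lambda$. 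Combined with the boundedness of $H$ provided by (ii), this yields $|p|_X, |q|_X \le R_0$ for some $R_0$ independent of $\lambda$.

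\emph{A priori bound on the solution.} Once $p, q$ are bounded, substituting into the integral formulas for $N_1, N_2$ and applying the Lipschitz estimates of (i), together with the linear growth bounds $|F(x,y)|_X \le a_F|x|_X + b_F|y|_X + |F(0,0)|_X$ and the analogue for $G$, leads to the vector inequality
\[
\begin{pmatrix} |x|_0 \\ |y|_0 \end{pmatrix} \le M \begin{pmatrix} |x|_0 \\ |y|_0 \end{pmatrix} + \begin{pmatrix} c_1 \\ c_2 \end{pmatrix},
\]
in which the growth matrix is exactly the one in (iii), while $c_1, c_2$ depend only on $R_0$, $|F(0,0)|_X$, $|G(0,0)|_X$, and a bound for $H$. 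Since $M$ is convergent to zero, Lemma~\ref{caracterizare} gives $(I-M)^{-1} \ge 0$, hence $(|x|_0, |y|_0)^\top \le (I-M)^{-1}(c_1, c_2)^\top$, uniformly in $\lambda$. Leray--Schauder's theorem then produces a fixed point of $N$ in a sufficiently large ball, which by the lemma of Subsection~3.1 solves the mutual control problem.

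The main delicacy is the first stage of the a priori bound: the non-standard controllability condition (\ref{control condition}) couples $p$ and $q$ through $S(T)$ in a way that only permits solving for them when $B_\lambda$ is invertible, and the sharp threshold in (iv) is exactly what makes $B_\lambda$ uniformly invertible over the whole homotopy range $\lambda \in [0,1]$, the bound being saturated at $\lambda = 1$.
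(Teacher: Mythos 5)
Your a priori estimates are sound, but the proposal fails at its very first ingredient: the operator $N=(N_{1},N_{2})$ is \emph{not} completely continuous on $C([0,T];X)^{2}$, so Leray--Schauder cannot be applied to it directly. The claim that ``the evaluation-type terms $t\mapsto S(t)x(0)$ are compact as well'' is false in general: a compact $C_{0}$-semigroup is compact only for $t>0$, $S(0)=I$, and such a semigroup is not norm-continuous at $t=0$ unless $A$ is bounded. Concretely, solving (\ref{eq1}) for the initial values produces in $N_{1}$ the unsmoothed summand $\frac{kb}{a}S(t)y(0)$ and in $N_{2}$ the summand $\frac{a}{kb}S(t)x(0)$. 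Take the bounded set of constant functions $y\equiv v$, $|v|_{X}\leq 1$, and $x\equiv 0$: since evaluation at $t=0$ is continuous from $C([0,T];X)$ to $X$, relative compactness of $N_{1}(0,y)$ would force the set of values $N_{1}(0,y)(0)=\frac{kb}{a}v+(\text{terms lying in a fixed relatively compact set, by compactness of }S(T)\text{ and of the convolution integrals})$ to be relatively compact, i.e.\ the ball $\{\frac{kb}{a}v:|v|_{X}\leq1\}$ would be relatively compact --- impossible in infinite dimensions. (The terms $S(t+T)x(0)$ and the integrals $\int_{0}^{t}S(t-s)(\cdot)\,ds$ are indeed compact; the obstruction is exactly the $kb\,y(0)$ and $a\,x(0)$ summands, and equicontinuity at $t=0$ fails for them too.)

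This is precisely why the paper does not run a Leray--Schauder argument on the path space: it first freezes the initial data, replacing $(x(0),y(0))$ by parameters $(\alpha,\beta)$, solves the resulting system uniquely by Perov's theorem --- this is where hypothesis (i) together with the matrix (\ref{r0}) is genuinely used as a vector contraction, something your proposal never exploits (you only use (i) through linear growth, which is itself a hint that the direct route proves ``too much'') --- and only afterwards applies Leray--Schauder to the map $(\alpha,\beta)\mapsto(x_{\alpha,\beta}(0),y_{\alpha,\beta}(0))$ on $X^{2}$, where the compactness question concerns $S(T)$-terms and a single integral with bounded integrand (condition (ii)), not a nonlinear solution operator on $C([0,T];X)$. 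Your homotopy computation (adding the two equations at $t=0$ to get $a\,x(0)+kb\,y(0)=0$, the operator $B_{\lambda}$ made uniformly invertible by (iv) and the boundedness of $H=F-kG$, and the vector bound via $(I-M)^{-1}\geq0$) closely parallels the paper's estimate for the initial values and is correct; but without complete continuity of $N$ the fixed point theorem you invoke does not apply, so as written the proof does not go through. You would need either to reorganize it along the paper's two-step Perov/Leray--Schauder scheme or to supply a separate compactness argument for the initial-value terms.
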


\begin{proof}
We prove the result in two steps: (a) First, using Perov's fixed point
theorem we prove that for any $\alpha ,\beta \in X,$ there exists in $%
C\left( \left[ 0,T\right] ;X\right) ^{2}$ a unique fixed point $\left(
x_{\alpha ,\beta },y_{\alpha ,\beta }\right) $ of the operator $%
(N_{1},N_{2}) $ defined as follows:%
\begin{align}
N_{1}\left( x,y\right) \left( t\right) &= \frac{1}{a} S\left( t\right)
\left[ S\left( T\right) \alpha - k S\left( T\right) \beta + k b \beta \right] \\
&\quad  + \frac{1}{a}S(t)\int_{0}^{T} S\left( T - s\right) \left( F\left( x\left( s\right), y\left( s\right) \right) - k G\left( x\left( s\right), y\left( s\right) \right) \right) ds  \notag \\
&\quad + \int_{0}^{t} S\left( t - s\right) F\left( x\left( s\right), y\left( s\right) \right) ds. \notag
\end{align}
\begin{align}
N_{2}\left( x,y\right) \left( t\right) &= \frac{1}{k b} S\left( t\right)
\left[ - S\left( T\right) \alpha + k S\left( T\right) \beta + a \alpha \right] \notag \\
&\quad  -  \frac{1}{k b} S\left( t\right)\int_{0}^{T} S\left( T - s\right) \left( F\left( x\left( s\right), y\left( s\right) \right) - k G\left( x\left( s\right), y\left( s\right) \right) \right) ds \notag \\
&\quad + \int_{0}^{t} S\left( t - s\right) G\left( x\left( s\right), y\left( s\right) \right) ds. \notag
\end{align}

For any $x,\overline{x},y,\overline{y}\in C\left( \left[ 0,T\right]
;X\right) ,$ one has 
\begin{align*}
\left\vert N_{1}\left( x,y\right) -N_{1}\left( \overline{x},\overline{y}%
\right) \right\vert _{0} 
&\leq TC_{A}\left( \frac{1}{a}C_{A}\left( a_{F}+ka_{G}\right) +a_{F}\right)
\left\vert x-\overline{x}\right\vert _{0}\\&\quad +TC_{A}\left( \frac{1}{a}%
C_{A}\left( b_{F}+kb_{G}\right) +b_{F}\right) \left\vert y-\overline{y}%
\right\vert _{0}, 
\end{align*}
and
\begin{align*}
\left\vert N_{2}\left( x,y\right) -N_{2}\left( \overline{x},\overline{y}%
\right) \right\vert _{0} 
&\leq TC_{A}\left( \frac{1}{kb}C_{A}\left( a_{F}+ka_{G}\right)
+a_{G}\right) \left\vert x-\overline{x}\right\vert _{0}\\&\quad+TC_{A}\left( \frac{1%
}{kb}C_{A}\left( b_{F}+kb_{G}\right) +b_{G}\right) \left\vert y-\overline{y}%
\right\vert _{0}.
\end{align*}%
Writing the above relations in the vector form, we obtain 
\begin{equation*}
\left[ 
\begin{array}{c}
\left\vert N_{1}\left( x,y\right) -N_{1}\left( \overline{x},\overline{y}%
\right) \right\vert _{0} \\[5pt] 
\left\vert N_{2}\left( x,y\right) -N_{2}\left( \overline{x},\overline{y}%
\right) \right\vert _{0}%
\end{array}%
\right] \leq M\left[ 
\begin{array}{c}
\left\vert x-\overline{x}\right\vert _{0} \\[5pt] 
\left\vert y-\overline{y}\right\vert _{0}%
\end{array}%
\right] .
\end{equation*}%
Consequently, $N=(N_{1},N_{2})$ is a Perov contraction on $C\left( \left[ 0,T%
\right] ;X\right) ^{2}$ and thus it has a unique fixed point.

(b) Using Leray-Schauder's fixed point theorem we prove that the map $%
H:X^{2}\rightarrow X^{2},$%
\begin{equation*}
\left( \alpha ,\beta \right) \mapsto \left( x_{\alpha ,\beta }\left(
0\right) ,y_{\alpha ,\beta }\left( 0\right) \right) ,
\end{equation*}%
has at least one fixed point $\left( \alpha ^{\ast },\beta ^{\ast }\right) .$
Then the pair $\left( x_{\alpha ^{\ast },\beta ^{\ast }},y_{\alpha ^{\ast
},\beta ^{\ast }}\right) $ is a solution of the mutual control problem. To start,  we show that $H$ is continuous. Indeed, denoting%
\begin{align*}
\gamma_1(\alpha, \beta) &= \frac{1}{a}S(t) \left( S(T) \alpha - k S(T) \beta + k b \beta \right), \\
\gamma_2(\alpha, \beta) &= \frac{1}{kb}S(t) \left( -S(T) \alpha + k S(T) \beta + a \alpha \right),
\end{align*}
one has%
\begin{align*}
\left\vert \gamma_1(\alpha, \beta) - \gamma_1(\overline{\alpha}, \overline{\beta}) \right\vert_X &\leq C \left( \left\vert \alpha - \overline{\alpha} \right\vert_X + \left\vert \beta - \overline{\beta} \right\vert_X \right), \\
\left\vert \gamma_2(\alpha, \beta) - \gamma_2(\overline{\alpha}, \overline{\beta}) \right\vert_X &\leq C \left( \left\vert \alpha - \overline{\alpha} \right\vert_X + \left\vert \beta - \overline{\beta} \right\vert_X \right).
\end{align*}
for a suitable constant $C>0.$ Then%
\begin{align*}
\left\vert x_{\alpha, \beta} - x_{\overline{\alpha}, \overline{\beta}} \right\vert_0 
&\leq C \left( \left\vert \alpha - \overline{\alpha} \right\vert_X + \left\vert \beta - \overline{\beta} \right\vert_X \right) \\
&\quad + m_{11} \left\vert x_{\alpha, \beta} - x_{\overline{\alpha}, \overline{\beta}} \right\vert_0 
+ m_{12} \left\vert y_{\alpha, \beta} - y_{\overline{\alpha}, \overline{\beta}} \right\vert_0, \\
\left\vert y_{\alpha, \beta} - y_{\overline{\alpha}, \overline{\beta}} \right\vert_0 
&\leq C \left( \left\vert \alpha - \overline{\alpha} \right\vert_X + \left\vert \beta - \overline{\beta} \right\vert_X \right) \\
&\quad + m_{21} \left\vert x_{\alpha, \beta} - x_{\overline{\alpha}, \overline{\beta}} \right\vert_0 
+ m_{22} \left\vert y_{\alpha, \beta} - y_{\overline{\alpha}, \overline{\beta}} \right\vert_0.
\end{align*}
where $m_{ij}$ are the entries of matrix $M.$ Hence%
\begin{equation*}
\begin{bmatrix}
\left\vert x_{\alpha, \beta} - x_{\overline{\alpha}, \overline{\beta}} \right\vert_0 \\[7pt]
\left\vert y_{\alpha, \beta} - y_{\overline{\alpha}, \overline{\beta}} \right\vert_0
\end{bmatrix} \leq M 
\begin{bmatrix}
\left\vert x_{\alpha, \beta} - x_{\overline{\alpha}, \overline{\beta}} \right\vert_0 \\[7pt]
\left\vert y_{\alpha, \beta} - y_{\overline{\alpha}, \overline{\beta}} \right\vert_0
\end{bmatrix}  +
\begin{bmatrix}
C \left( \left\vert \alpha - \overline{\alpha} \right\vert_X + \left\vert \beta - \overline{\beta} \right\vert_X \right) \\[7pt]
C \left( \left\vert \alpha - \overline{\alpha} \right\vert_X + \left\vert \beta - \overline{\beta} \right\vert_X \right)
\end{bmatrix},
\end{equation*}
equivalently%
\begin{equation*}
\begin{bmatrix}
\left\vert x_{\alpha, \beta} - x_{\overline{\alpha}, \overline{\beta}} \right\vert_0 \\[7pt]
\left\vert y_{\alpha, \beta} - y_{\overline{\alpha}, \overline{\beta}} \right\vert_0
\end{bmatrix} \leq \left( I-M\right) ^{-1}\begin{bmatrix}
C \left( \left\vert \alpha - \overline{\alpha} \right\vert_X + \left\vert \beta - \overline{\beta} \right\vert_X \right) \\[7pt]
C \left( \left\vert \alpha - \overline{\alpha} \right\vert_X + \left\vert \beta - \overline{\beta} \right\vert_X \right)
\end{bmatrix},
\end{equation*}%
which clearly shows that $x_{\alpha ,\beta }$ and $y_{\alpha ,\beta }$
depend continuously on $\alpha $ and $\beta .$ This implies the continuity
of $H.$ Next we show that $H$ maps bounded sets into bounded sets. This
follows from the estimate%
\begin{equation*}
\left[ 
\begin{array}{c}
\left\vert x_{\alpha ,\beta }-x_{0,0}\right\vert _{0}\smallskip \\ 
\left\vert y_{\alpha ,\beta }-y_{0,0}\right\vert _{0}%
\end{array}%
\right] \leq \left( I-M\right) ^{-1}\left[ 
\begin{array}{c}
C\left( \left\vert \alpha \right\vert _{X}+\left\vert \beta \right\vert
_{X}\right) \smallskip \\ 
C\left( \left\vert \alpha \right\vert _{X}+\left\vert \beta \right\vert
_{X}\right)%
\end{array}%
\right] .
\end{equation*}%
Third, since the semigroup is compact, $H$ maps any bounded set into a
compact set. Therefore, the map $H$ is completely continuous. The final step is to establish the boundedness of the set of all solutions to the family of equations
\begin{equation*}
H\left( \alpha ,\beta \right) =\lambda \left( \alpha ,\beta \right) ,\ \ \
\lambda >1.
\end{equation*}%
If $\left( \alpha ,\beta \right) $ is such a solution, then%
\begin{align}
\lambda \alpha &= \frac{1}{a} \big[ S\left( T \right) \alpha - kS\left( T \right) \beta + kb \beta  \notag \\
&\quad + \int_{0}^{T} S\left( T - s \right) \left( F\left( x_{\alpha, \beta}\left( s \right), y_{\alpha, \beta}\left( s \right) \right) - kG\left( x_{\alpha, \beta}\left( s \right), y_{\alpha, \beta}\left( s \right) \right) \right) ds \big], \label{f1}
\end{align}
\begin{align*}
\lambda \beta &= \frac{1}{kb} \big[ -S\left( T \right) \alpha + kS\left( T \right) \beta + a \alpha  \\
&\quad - \int_{0}^{T} S\left( T - s \right) \left( F\left( x_{\alpha, \beta}\left( s \right), y_{\alpha, \beta}\left( s \right) \right) - kG\left( x_{\alpha, \beta}\left( s \right), y_{\alpha, \beta}\left( s \right) \right) \right) ds \big].
\end{align*}
When we add the above two relations, we find
\begin{equation*}
\lambda \left( a\alpha +kb\beta \right) =a\alpha +kb\beta ,
\end{equation*}%
and since \(\lambda > 1\), this implies \(a\alpha + kb\beta = 0\). Substituting $kb\beta $
by $-a\alpha $ in (\ref{f1}) and passing to the norm, in virtue of (ii), we
obtain 
\begin{equation*}
2\left\vert \alpha \right\vert _{X}<\left( \lambda +1\right) \left\vert
\alpha \right\vert _{X}\leq \left( \frac{1}{a}+\frac{1}{b}\right) \left\vert
S\left( T\right) \right\vert _{\mathcal{L}\left( X\right) }\left\vert \alpha
\right\vert _{X}+\widetilde{C},
\end{equation*}%
for some constant $\widetilde{C}.$ Using the condition on $\left\vert
S\left( T\right) \right\vert _{\mathcal{L}\left( X\right) }$ from (iv), we
deduce the boundedness of $\alpha $ independently of $\lambda .$ Next from $%
\alpha \alpha +kb\beta =0,$ we also have that $\beta $ is bounded. Therefore
Leray-Schauder's theorem applies and gives the result.
\end{proof}

\begin{remark}
The previous existence result can be seen as an observability result.
Indeed, having observed the final relation (\ref{3.1'}), we may guess one of
the possible initial states $\left( x\left( 0\right) ,y\left( 0\right)
\right) $ of the process.
\end{remark}

In the next section we deal with a semi-observability problem for the
situation that one of the initial states, say $y\left( 0\right) ,$ is known
while the second one $x\left( 0\right) $ is found after obtaining a solution
of the mutual control problem.

\section{A semi-observability problem}

In this section, we aim to find a mild solution of the control problem %
\eqref{pb abstracta}-(\ref{control condition}) assuming that the initial
state $y\left( 0\right) =\beta $ is known. More exactly, we look for $x,y\in
C\left( \left[ 0,T\right] ;X\right) $ such that, for all $t\in \lbrack 0,T]$%
, the following relations hold: 
\begin{equation}
\begin{cases}
x\left( t\right) =S\left( t\right) x\left( 0\right) +\int_{0}^{t}S\left(
t-s\right) F\left( x\left( s\right) ,y\left( s\right) \right) ds \\ 
y\left( t\right) =S\left( t\right) y\left( 0\right) +\int_{0}^{t}S\left(
t-s\right) G\left( x\left( s\right) ,y\left( s\right) \right) ds,%
\end{cases}
\label{E1}
\end{equation}%
\begin{equation}
y\left( 0\right) =\beta ,  \label{E2}
\end{equation}%
\begin{equation}
x\left( T\right) -ax\left( 0\right) =k\left( y\left( T\right) -by\left(
0\right) \right) .  \label{E3}
\end{equation}

\subsection{Fixed point formulation of the problem}

Similar to the previous section, we try to incorporate all the equations in only one fixed
point problem.
Let $\left( x,y\right) $ be a solution of (\ref{E1})-(\ref{E3}). Then, using
(\ref{E1}) and (\ref{E3}) gives%
\begin{equation*}
S\left( T\right) \left( x\left( 0\right) -k\beta \right)
+\int_{0}^{T}S\left( T-s\right) \left( F\left( x(s),y(s)\right) -kG\left(
x(s),y(s)\right) \right) ds=ax\left( 0\right) -kb\beta ,
\end{equation*}%
whence%
\begin{align}\label{EC}
x\left( 0\right) &= \frac{1}{a} \left( S\left( T\right) \left( x\left( 0\right) - k\beta \right) + kb\beta \right) \\
& \quad + \frac{1}{a} \int_{0}^{T} S\left( T - s \right) \left( F\left( x(s), y(s) \right)ds - kG\left( x(s), y(s) \right) \right) \, ds. \notag
\end{align}
When we substitute it in (\ref{E1}), one has
\begin{equation}
    \begin{cases}
x\left( t\right)  =N_{1}\left( x,y\right) \left( t\right)  \\
y\left( t\right)  =N_{2}\left( x,y\right) \left( t\right) ,
\end{cases}
\end{equation}\label{E0}
where%
\begin{align*}
N_{1}(x,y)(t) &= \frac{1}{a}S(t+T)\left( x(0) - k\beta \right) + k\frac{b}{a}S(t)\beta \\
              &\quad + \int_{0}^{t} S(t-s) F(x(s), y(s))\, ds \\
              &\quad + \frac{1}{a} \int_{0}^{T} S(t+T-s)\left( F(x, y - kG(x, y)) \right), \\
N_{2}(x,y)(t) &= S(t)\beta + \int_{0}^{t} S(t-s) G(x(s), y(s))\, ds.
\end{align*}
The converse implication also holds.

\begin{lemma}
A pair $\left( x,y\right) $ solves the system $(\ref{E0})$ if and only if
it solves $(\ref{E1})$-$(\ref{E3})$.
\end{lemma}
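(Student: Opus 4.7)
The plan is to prove both implications of the equivalence. The forward direction ``$(x,y)$ solves (\ref{E1})-(\ref{E3}) $\Rightarrow$ $(x,y)$ solves the fixed-point system'' is essentially contained in the derivation preceding the lemma. Setting $t=T$ in (\ref{E1}) and using (\ref{E3}) yields the compatibility relation (\ref{EC}). Substituting $x(0)$ from (\ref{EC}) into the first equation of (\ref{E1}), and applying the semigroup identities $S(t)S(T)=S(t+T)$ and $S(t)S(T-s)=S(t+T-s)$, produces exactly the $N_{1}$ formula; the $N_{2}$ component is nothing but the mild-solution representation of $y$.

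For the converse, suppose $(x,y)$ satisfies the fixed-point system. I would begin with the $y$-component: setting $t=0$ in
\[
y(t)=N_{2}(x,y)(t)=S(t)\beta+\int_{0}^{t}S(t-s)G(x(s),y(s))\,ds
\]
gives $y(0)=\beta$, proving (\ref{E2}), while the same identity is already in mild-solution form, yielding the second equation of (\ref{E1}). For the $x$-component, I would use the semigroup property to factor $S(t)$ out of the terms $S(t+T)(x(0)-k\beta)$ and $\int_{0}^{T}S(t+T-s)(F-kG)\,ds$ in the definition of $N_{1}$, obtaining the rewriting
\[
N_{1}(x,y)(t)=S(t)\Lambda+\int_{0}^{t}S(t-s)F(x(s),y(s))\,ds,
\]
where
\[
\Lambda=\tfrac{1}{a}\Bigl[S(T)(x(0)-k\beta)+kb\beta+\int_{0}^{T}S(T-s)\bigl(F(x(s),y(s))-kG(x(s),y(s))\bigr)\,ds\Bigr].
\]
Evaluating $x=N_{1}(x,y)$ at $t=0$ forces $x(0)=\Lambda$, which is precisely (\ref{EC}). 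Plugging $\Lambda=x(0)$ back into the displayed form then gives the first equation of (\ref{E1}).

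Finally, to recover (\ref{E3}), I would set $t=T$ in the now-established mild-solution formulas to obtain $x(T)-S(T)x(0)=\int_{0}^{T}S(T-s)F\,ds$ and $y(T)-S(T)\beta=\int_{0}^{T}S(T-s)G\,ds$, and substitute both into (\ref{EC}) to arrive at $ax(0)-kb\beta=x(T)-ky(T)$, which together with $y(0)=\beta$ is exactly (\ref{E3}). The only genuine obstacle is careful bookkeeping with the semigroup identity $S(t+\tau)=S(t)S(\tau)$, which is what allows $N_{1}$ to be expressed as $S(t)\Lambda+\int_{0}^{t}S(t-s)F\,ds$ and makes its value at $t=0$ coincide with the compatibility relation (\ref{EC}); no estimate or fixed-point machinery is required.
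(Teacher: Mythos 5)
Your proposal is correct and follows essentially the same route as the paper: both directions rest on the semigroup identity $S(t+\tau)=S(t)S(\tau)$, evaluation of the fixed-point equations at $t=0$ to recover (\ref{EC}) and (\ref{E2}), and evaluation at $t=T$ to recover (\ref{E3}). The only difference is cosmetic ordering — you first rewrite $N_{1}$ as $S(t)\Lambda+\int_{0}^{t}S(t-s)F\,ds$ and obtain the mild form of $x$ before deriving (\ref{E3}), whereas the paper derives (\ref{E3}) directly from the $t=T$ evaluation and only then notes the mild form — so the computations coincide.
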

\begin{proof}
We only have to prove that (\ref{E0}) implies (\ref{E1})-(\ref{E3}).
 Clearly, (\ref{E2}) follows directly from the second equation in (\ref{E0}). Setting $%
t=0$ in the first equation of (\ref{E0}) gives (\ref{EC}), while setting $t=T$
provides%
\begin{align*}
x(T) &= S(T) \left[ \frac{1}{a}S(T)\left( x(0) - k\beta \right) + k\frac{b}{a}\beta\right] \\& \quad + \frac{1}{a}S(T)\int_{0}^{T} S(T-s) \left( F(x(s),y(s)) - kG(x(s),y(s)) \right) ds \\
&\quad + \int_{0}^{T} S(T-s) F(x(s),y(s)) ds.
\end{align*}
From (\ref{EC}), one has%
\begin{equation*}
S\left( T\right) x\left( 0\right) =x\left( T\right) -\int_{0}^{T}S\left(
T-s\right) F\left( x(s),y(s)\right) ds,
\end{equation*}
and using it in (\ref{EC}) yields%
\begin{align*}
x(0) &= \frac{1}{a}x(T) - \frac{1}{a}\int_{0}^{T} S(T-s) F(x(s),y(s)) ds - k\frac{1}{a}S(T)\beta + k\frac{b}{a}\beta \\
&\quad + \frac{1}{a}\int_{0}^{T} S(T-s) \left( F(x(s),y(s)) - kG(x(s),y(s)) \right)  ds \\
&= \frac{1}{a}x(T) - k\frac{1}{a}S(T)\beta + k\frac{b}{a}\beta - k\frac{1}{a}\int_{0}^{T} S(T-s) G(x(s),y(s)) ds \\
&= \frac{1}{a}x(T) - k\frac{1}{a}\left[ S(T)\beta + \int_{0}^{T} S(T-s) G(x(s),y(s)) ds \right] + k\frac{b}{a}\beta \\
&= \frac{1}{a}x(T) - k\frac{1}{a}y(T) + k\frac{b}{a}\beta,
\end{align*}
that is (\ref{E3}). Finally, combining (\ref{EC}) and the first equation of (\ref{E0}) results in the first equation of (\ref{E1}).
\end{proof}

\subsection{Existence and uniqueness via Perov's fixed point theorem}

Using Perov's fixed point theorem, we obtain the following existence and
uniqueness result.

\begin{theorem}
\label{t31} Assume that the following conditions are satisfied:

\begin{itemize}
\item[(i) ] There are nonnegative constants $a_{11},a_{12},a_{21},a_{22}$
such that 
\begin{align*}
& |F(x,y)-F(\overline{x},\overline{y})|_{X}\leq a_{11}|x-\overline{x}%
|_{X}+a_{12}|y-\overline{y}|_{X}, \\
& |G(x,y)-G(\overline{x},\overline{y})|_{X}\leq a_{21}|x-\overline{x}%
|_{X}+a_{22}|y-\overline{y}|_{X},
\end{align*}%
for all $x,\overline{x},y,\overline{y}\in X$ and $t\in \left[ 0,T\right] ;$

\item[(ii)] There exists $\theta \geq 0$ such that the matrix 
\begin{equation*}
M(\theta ):=C_{A}%
\begin{bmatrix}
\frac{1}{a}+\left( 1+\frac{1}{a}\right) Ta_{11}+\frac{k}{a}Ta_{21} & \left(
\left( 1+\frac{1}{a}\right) a_{12}+\frac{k}{a}a_{22}\right) \frac{e^{\theta
T}-1}{\theta } \\[3pt] 
Ta_{21} & a_{22}\,\frac{1-e^{-\theta T}}{\theta }%
\end{bmatrix}%
\end{equation*}%
is convergent to zero.
\end{itemize}

Then, for every $\beta \in X$, problem \emph{(\ref{el})-(\ref{3.1'})} has a
unique solution $\left( x,y\right) $ in $C\left( \left[ 0,T\right] ;X\right)
^{2}$ with $y\left( 0\right) =\beta .$
\end{theorem}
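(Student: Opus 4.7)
The plan is to apply Perov's fixed point theorem to the operator $(N_1, N_2)$ built in the preceding subsection, using a mixed norm structure on the product space $C([0,T];X)^2$: the uniform (Chebyshev) norm $|\cdot|_\infty$ on the first factor and the Bielecki norm $|\cdot|_\theta$ on the second factor, where $\theta$ is the value furnished by hypothesis (ii). By the previous lemma, any fixed point of $(N_1,N_2)$ is a mild solution of \eqref{E1}--\eqref{E3} satisfying $y(0)=\beta$, so the existence and uniqueness of a fixed point directly yields the theorem.

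First I would estimate $|N_1(x,y)-N_1(\bar x,\bar y)|_\infty$ by splitting $N_1$ into its three summands: the prefactor containing $x(0)-k\beta$ (which contributes $\tfrac{C_A}{a}|x-\bar x|_\infty$ via $|x(0)-\bar x(0)|_X\le |x-\bar x|_\infty$), the Volterra integral $\int_0^t S(t-s)F(x,y)\,ds$, and the non-standard integral $\tfrac{1}{a}\int_0^T S(t+T-s)(F-kG)(x,y)\,ds$. For the $x$-differences I use $|x(s)-\bar x(s)|_X\le |x-\bar x|_\infty$, which together with $\int_0^t ds\le T$ and $\int_0^T ds=T$ produces the factors of $T$; for the $y$-differences I use $|y(s)-\bar y(s)|_X\le e^{\theta s}|y-\bar y|_\theta$ together with $\int_0^T e^{\theta s}\,ds=\tfrac{e^{\theta T}-1}{\theta}$. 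Adding the three contributions and using (i) and $|S(\cdot)|_{\mathcal{L}(X)}\le C_A$ on $[0,2T]$ gives precisely the coefficients of the first row of $M(\theta)$.

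Next I would estimate $|N_2(x,y)-N_2(\bar x,\bar y)|_\theta$. Since $N_2$ is a standard Volterra-type operator, the $x$-contribution is bounded using $|x(s)-\bar x(s)|_X\le |x-\bar x|_\infty$ and $t e^{-\theta t}\le T$, while the $y$-contribution is bounded using $|y(s)-\bar y(s)|_X\le e^{\theta s}|y-\bar y|_\theta$ and the inequality $\tfrac{1-e^{-\theta t}}{\theta}\le \tfrac{1-e^{-\theta T}}{\theta}$. This yields exactly the second row of $M(\theta)$. Writing both estimates in vector form produces the Perov contraction inequality with matrix $M(\theta)$, so by (ii) and Perov's theorem there is a unique fixed point in the product space, which by the lemma is the required mild solution of \eqref{E1}--\eqref{E3}.

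The main technical subtlety is the treatment of the non-standard integral $\tfrac{1}{a}\int_0^T S(t+T-s)(F-kG)\,ds$ in $N_1$: because its upper limit is $T$ rather than $t$, the Bielecki factor $e^{-\theta t}$ cannot absorb the growth of $e^{\theta s}$ across the full interval $[0,T]$, which is precisely why the factor $\tfrac{e^{\theta T}-1}{\theta}$ appears in the $(1,2)$ entry of $M(\theta)$. The asymmetric choice of norms (uniform on the first component, Bielecki on the second) is essential: it is this choice that produces the $T$-factors in the first column and the $\theta$-dependent factors in the second column, matching exactly the matrix stated in the theorem.
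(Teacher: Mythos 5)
Your proposal is correct and follows essentially the same route as the paper: Perov's fixed point theorem applied to $(N_1,N_2)$ on $C([0,T];X)\times\{y\in C([0,T];X):\,y(0)=\beta\}$ with the uniform norm on the first factor and the Bielecki norm $|\cdot|_\theta$ on the second, with exactly the estimates (including the $[0,2T]$ bound $C_A$ for the shifted semigroup term $S(t+T-s)$) that produce the matrix $M(\theta)$. The only detail left implicit, and trivially verified since $N_2(x,y)(0)=S(0)\beta=\beta$, is that $N_2$ maps into the set $\{y(0)=\beta\}$, which is needed so that Perov's theorem applies on this complete metric space and the fixed point satisfies the prescribed initial condition.
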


\begin{proof}
We apply Perov's fixed point theorem to the operator $(N_{1},N_{2})$ with
the following choice of spaces: $X_{1}=$ $C([0,T];X)$ endowed with the
uniform norm, and $X_{2}=\left\{ y\in
C([0,T];X):\ y\left( 0\right) =\beta \right\} $ endowed with the metric
induced by the Bielecki norm $\left\vert \cdot\right\vert _{\theta }$ on $%
C([0,T];X)$. Clearly, $N_{1}\left( X_{1}\times X_{2}\right) \subset X_{1},$
while the inclusion $N_{2}\left( X_{1}\times X_{2}\right) \subset X_{2}$ is
obvious from the definition of $N_{2}.$ Next, let $x,\overline{x}\in X_{1}$
and $y,\overline{y}\in X_{2}$. One has \ 
\begin{align*}
& \left\vert N_{1}(x,y)\left( t\right) -N_{1}(\overline{x},\overline{y}%
)\left( t\right) \right\vert _{X} \\
& \leq \frac{1}{a}C_{A}\left\vert x-\overline{x}\right\vert _{0}+\left(
\left( 1+\frac{1}{a}\right) C_{A}Ta_{11}+\frac{k}{a}C_{A}Ta_{21}\right)
\left\vert x-\overline{x}\right\vert _{0} \\
& \quad +\left( \left( 1+\frac{1}{a}\right) C_{A}a_{12}+\frac{k}{a}%
C_{A}a_{22}\right) \int_{0}^{T}\left\vert y\left( s\right) -\overline{y}%
\left( s\right) \right\vert _{X}ds \\
& \leq m_{11}\left\vert x-\overline{x}\right\vert _{0}+m_{12}\frac{e^{\theta
T}-1}{\theta }|y-\overline{y}|_{\theta },
\end{align*}%
where%
\begin{align*}
m_{11} &= C_{A}\left( \frac{1}{a} + \left( 1 + \frac{1}{a} \right) Ta_{11} + \frac{k}{a} Ta_{21} \right), \\
m_{12} &= C_{A}\left( \left( 1 + \frac{1}{a} \right) a_{12} + \frac{k}{a} a_{22} \right).
\end{align*}
Therefore, taking the supremum over $t\in \lbrack 0,T]$ yields 
\begin{equation}
\left\vert N_{1}(x,y)-N_{1}(\overline{x},\overline{y})\right\vert _{0}\leq
m_{11}|x-\overline{x}|_{0}+m_{12}\frac{e^{\theta T}-1}{\theta }|y-\overline{y%
}|_{\theta }.  \label{r2}
\end{equation}%
For the second operator $N_{2}$, we estimate 
\begin{align*}
\left\vert N_{2}(x,y)\left( t\right) -N_{2}(\overline{x},\overline{y})\left(
t\right) \right\vert _{X}& \leq \,\,C_{A}Ta_{21}|x-\overline{x}|_{0} \\
& +a_{22}\,C_{A}\int_{0}^{t}|y(s)-\overline{y}(s)|_{X}ds \\
& \leq m_{21}|x-\overline{x}|_{0}+m_{22}\frac{e^{\theta t}-1}{\theta }|y-%
\overline{y}|_{\theta },
\end{align*}%
where 
\begin{equation*}
m_{21}=\,C_{A}Ta_{21},\,\ \ m_{22}=C_{A}a_{22}.
\end{equation*}%
Multiplying the both sides by $e^{-\theta t}$ and taking the supremum over $%
t\in \lbrack 0,T]$, we obtain 
\begin{equation}
\left\vert N_{2}(x,y)-N_{2}(\overline{x},\overline{y})\right\vert _{\theta
}\leq m_{21}|x-\overline{x}|_{0}+m_{22}\frac{1-e^{-\theta T}}{\theta }|y-%
\overline{y}|_{\theta }.  \label{r3}
\end{equation}%
Now, writing inequalities (\ref{r2}) and (\ref{r3}) in a vector form, gives 
\begin{equation*}
\begin{bmatrix}
\left\vert N_{1}(x,y)-N_{1}(\overline{x},\overline{y})\right\vert
_{0}\smallskip \\ 
\left\vert N_{2}(x,y)-N_{2}(\overline{x},\overline{y})\right\vert _{\theta }%
\end{bmatrix}%
\leq M(\theta )%
\begin{bmatrix}
|x-\overline{x}|_{0}\smallskip \\ 
|y-\overline{y}|_{\theta }%
\end{bmatrix}%
,
\end{equation*}%
where from assumption (ii), the matrix $M(\theta )$ is convergent to zero.
Therefore, Perov's fixed point theorem applies and guarantees the existence
of a unique fixed point $\left( x,y\right) \in X_{1}\times X_{2}$ of the
operator $(N_{1},N_{2})$.
\end{proof}

\subsection{Existence via Schauder's fixed point theorem}

The next result does not require Lipschitz conditions for $F$ and $G,$ but
only linear growth ones. Although this sacrifices uniqueness, it offers the advantage of localizing the solution.

\begin{theorem}
\label{teorema cu Schauder} Assume the there are nonnegative constants $%
a_{ij},$ $i=1,2;$ $j=1,2,3$ such that 
\begin{align}
& |F(x,y)|_{X}\leq a_{11}|x|_{X}+a_{12}|y|_{X}+a_{13}, \\
& |G(x,y)|_{X}\leq a_{21}|x|_{X}+a_{22}|y|_{X}+a_{23},  \notag
\end{align}%
for all $x,y\in X$ and $t\in \left[ 0,T\right] $.
In addition, assume that $\left\{ S\left( t\right) ;t\geq 0\right\} $ is a
compact semigroup and condition \emph{(ii)} in Theorem \emph{\ref{teorema cu
Perov}} holds.

Then, for each $\beta \in X,$ problem \emph{(\ref{el})-(\ref{3.1'})} has at
least one solution $\left( x,y\right) $ in $C\left( \left[ 0,T\right]
;X\right) ^{2}$ with $y\left( 0\right) =\beta .$
\end{theorem}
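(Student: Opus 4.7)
The plan is to apply Schauder's fixed point theorem to the operator $N = (N_1, N_2)$ from the semi-observability formulation, on a suitable closed, bounded, convex subset of $X_1 \times X_2$, where $X_1 = C([0,T];X)$ carries the uniform norm $|\cdot|_0$ and $X_2 = \{y \in C([0,T];X) : y(0) = \beta\}$ carries a Bielecki norm $|\cdot|_\theta$, mirroring the setup of Theorem \ref{t31}.

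The first step is to construct an invariant ball. Repeating the estimates from the proof of Theorem \ref{t31}, but replacing each Lipschitz difference by the corresponding linear-growth bound on $|F(x,y)|_X$ and $|G(x,y)|_X$, one arrives at a vector inequality
\begin{equation*}
\begin{bmatrix} |N_1(x,y)|_0 \\ |N_2(x,y)|_\theta \end{bmatrix} \leq M(\theta) \begin{bmatrix} |x|_0 \\ |y|_\theta \end{bmatrix} + \begin{bmatrix} c_1 \\ c_2 \end{bmatrix},
\end{equation*}
with $M(\theta)$ essentially the matrix from Theorem \ref{t31}. The constants $c_1, c_2$ absorb the contributions from $\beta$, from the additive terms $a_{13}, a_{23}$ in the linear-growth hypothesis, and---most importantly---from the integral $\tfrac{1}{a}\int_0^T S(t+T-s)(F-kG)\,ds$ inside $N_1$, which is uniformly bounded in $(x,y)$ by condition (ii) of Theorem \ref{teorema cu Perov}. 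Choosing $\theta$ so that $M(\theta)$ is convergent to zero, the vector $\rho := (I - M(\theta))^{-1}(c_1, c_2)^T$ has nonnegative entries $\rho_1, \rho_2$, and the closed, bounded, convex set $B := \{(x,y) \in X_1 \times X_2 : |x|_0 \leq \rho_1,\ |y|_\theta \leq \rho_2\}$ is invariant under $N$.

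Continuity of $N$ on $B$ follows from the continuity of $F$ and $G$ together with dominated convergence, and relative compactness of $N(B)$ in $C([0,T];X)^2$ is proved by the Arzel\`a--Ascoli theorem together with the compactness of the semigroup: for each $t \in [0,T]$, the deterministic terms in $N_1, N_2$ lie in the image of a compact operator $S(\tau)$, and the convolution integrals are handled by the standard $\varepsilon$-shift trick $\int_0^t S(t-s)F(x(s),y(s))\,ds = S(\varepsilon)\int_0^{t-\varepsilon} S(t-s-\varepsilon)F(x(s),y(s))\,ds + o(1)$, whose main term lies in a compact set because $S(\varepsilon)$ is compact for $\varepsilon > 0$; equicontinuity in $t$ follows from strong continuity of $S$ and the uniform bound on $F, G$ over $B$. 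Schauder's theorem then yields a fixed point of $N$ in $B$, which by the lemma preceding the present theorem is a mild solution of \eqref{E1}--\eqref{E3} with $y(0) = \beta$. The main obstacle is the construction of $B$: without the boundedness of $H = F - kG$, the coupling with $x(0)$ through the controllability condition would produce a contribution to $N_1$ growing with $|x|_0$, preventing closure of the vector estimate; boundedness of $H$ makes this contribution purely additive, reducing matters to exactly the matrix-convergence argument already used for Theorem \ref{t31}.
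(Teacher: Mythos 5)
Your overall architecture (Schauder's theorem on an invariant box $\{|x|_{0}\le R_{1},\ |y|_{\theta }\le R_{2}\}$ obtained from a vector inequality and $(I-M(\theta ))^{-1}$, plus compactness of the semigroup for complete continuity) is the same as the paper's, but the way you use the hypotheses contains a genuine gap. You invoke two distinct facts: first, boundedness of $H=F-kG$ (the literal condition (ii) of Theorem \ref{teorema cu Perov}) to make the term $\frac{1}{a}\int_{0}^{T}S(t+T-s)\left( F-kG\right) ds$ purely additive; second, the existence of some $\theta \ge 0$ for which $M(\theta )$ is convergent to zero, which you introduce simply as ``choosing $\theta $''. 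The hypotheses supply only one of these, under either reading of the (ii)-reference. In fact the citation of Theorem \ref{teorema cu Perov} in the statement is a slip for condition (ii) of Theorem \ref{t31}: the paper's own proof uses exactly ``from assumption (ii), the matrix $M(\theta )$ is convergent to zero'', and the subsequent remark, Theorem \ref{t33} and the application in Section 5 all point to Theorem \ref{t31}(ii). Under that intended hypothesis, boundedness of $H$ is not available to you; under your literal reading, the existence of such a $\theta $ is not available and cannot be ``chosen'': if, say, $C_{A}/a\ge 1$, then already the $(1,1)$ entry of $M(\theta )$ (even of your reduced matrix, with first row $C_{A}(\frac{1}{a}+Ta_{11})$ and $C_{A}a_{12}\frac{e^{\theta T}-1}{\theta }$) is at least $1$ for every $\theta \ge 0$, so no admissible $\theta $ exists. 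Hence the construction of your invariant set $B$ is not justified from the stated assumptions.

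Relatedly, your closing claim --- that without boundedness of $H$ the vector estimate cannot be closed --- is incorrect, and it is precisely where the paper proceeds differently. The paper bounds $\frac{1}{a}\int_{0}^{T}S(t+T-s)\left( F-kG\right) ds$ using only the linear growth conditions; this contributes the extra terms $\left( 1+\frac{1}{a}\right) Ta_{11}+\frac{k}{a}Ta_{21}$ and $\left( 1+\frac{1}{a}\right) a_{12}+\frac{k}{a}a_{22}$ to the first row of $M(\theta )$ (which is exactly why the matrix of Theorem \ref{t31} has that form), while the coupling term $\frac{1}{a}S(t+T)x(0)$ accounts for the $\frac{1}{a}$ in the $(1,1)$ entry; the estimate closes and invariance of $D_{\beta ;R_{1},R_{2}}$ follows from \eqref{r11}. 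If you replace your two-fold use of hypotheses by this single estimate under the (intended) condition (ii) of Theorem \ref{t31}, the remainder of your argument --- continuity, the Arzel\`a--Ascoli/$\varepsilon $-shift compactness argument, and the application of Schauder's theorem together with the equivalence lemma --- is correct and coincides with the ``standard arguments'' the paper invokes.
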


\begin{proof}
We shall apply the Schauder's fixed point theorem to the operator $%
(N_{1},N_{2})$ on a closed convex bounded subset of $X_{1}\times X_{2}$ of
the form 
\begin{equation}
D_{\beta ;R_{1},R_{2}}:=\left\{ (x,y)\in C([0,T];X)^{2}\,:\,y\left( 0\right)
=\beta ,\ |x|_{0}\leq R_{1},\,|y|_{\theta }\leq R_{2}\right\} ,
\label{definitia D_alpha}
\end{equation}%
where the positive numbers $R_{1},$ $R_{2}$ will be determined in a such way
that the operator $(N_{1},N_{2})$ is invariant over $D_{\beta ;R_{1},R_{2}}$%
, i.e., 
\begin{equation*}
\left\vert N_{1}(x,y)\right\vert _{0}\leq R_{1},\ \left\vert
N_{2}(x,y)\right\vert _{\theta }\leq R_{2}\text{ \ whenever\ }|x|_{0}\leq
R_{1},\ \,|y|_{\theta }\leq R_{2}.
\end{equation*}%
Let $(x,y)\in D_{\beta ;R_{1},R_{2}}.$ Then, for all $t\in \lbrack 0,T]$, we
have%
\begin{align*}
\left\vert N_{1}(x,y)\left( t\right) \right\vert _{X}& \leq \frac{1}{a}%
C_{A}|x|_{0}+\frac{1}{a}C_{A}\left( 1+b\right) k\left\vert \beta \right\vert
_{X} \\
& +C_{A}\,\frac{k}{a}\int_{0}^{T}\left(
a_{21}|x(s)|_{X}+a_{22}|y(s)|_{X}+a_{23}\right) ds \\
& +C_{A}\left( 1+\frac{1}{a}\right) \int_{0}^{T}\left(
a_{11}|x(s)|_{X}+a_{12}|y(s)|_{X}+a_{13}\right) ds \\
& \leq m_{11}|x|_{0}+m_{12}\int_{0}^{T}e^{\theta s}e^{-\theta
s}|y(s)|_{X}+C_{1} \\
& \leq m_{11}R_{1}+m_{12}R_{2}\frac{e^{\theta T}-1}{\theta }+C_{1},
\end{align*}%
where 
\begin{equation*}
m_{11}=C_{A}\left( \frac{1}{a}+\left( 1+\frac{1}{a}\right) Ta_{11}+\frac{k}{a%
}Ta_{21}\right) ,\ \ m_{12}=C_{A}\left(\left( 1+\frac{1}{a}\right) a_{12}+\frac{k%
}{a}a_{22}\right),\text{ }
\end{equation*}%
and 
\begin{equation*}
C_{1}=\frac{1}{a}C_{A}\left( 1+b\right) k\left\vert \beta \right\vert
_{X}+C_{A}\frac{k}{a}Ta_{23}+C_{A}\left( 1+\frac{1}{a}\right) a_{13}.
\end{equation*}%
Hence, 
\begin{equation}
\left\vert N_{1}(x,y)\right\vert _{0}\leq m_{11}R_{1}+m_{12}R_{2}\frac{%
e^{\theta T}-1}{\theta }+C_{1}.  \label{cretere_N_1}
\end{equation}%
For the second operator $N_{2}$, we compute 
\begin{align*}
\left\vert N_{2}(x,y)\left( t\right) \right\vert _{X}& \leq C_{A}\left\vert
\beta \right\vert _{X}+C_{A}\int_{0}^{t}\left(
a_{21}|x(s)|_{X}+a_{22}|y(s)|_{X}+a_{23}\right) ds \\
& \leq \,C_{A}\,Ta_{21}|x|_{0}+a_{22}C_{A}\int_{0}^{t}e^{\theta s}e^{-\theta
s}|y(s)|_{X}ds+C_{2} \\
& \leq m_{21}|x|_{0}+m_{22}|y|_{\theta }\frac{e^{\theta t}-1}{\theta }+C_{2},
\end{align*}%
where $m_{21}=\,C_{A}\,Ta_{21}$, $m_{22}=C_{A}a_{22}$ and $%
C_{2}=C_{A}\left\vert \beta \right\vert _{X}+C_{A}Ta_{23}.$ Dividing by $%
e^{\theta t}$ and taking the supremum over $t\in \lbrack 0,T]$, we obtain 
\begin{equation}
\left\vert N_{2}(x,y)\right\vert _{\theta }\leq m_{21}R_{1}+m_{22}\frac{%
1-e^{-\theta T}}{\theta }R_{2}+C_{2}.  \label{r5}
\end{equation}%
In the matrix form, relations \eqref{cretere_N_1} and (\ref{r5}) read as 
\begin{equation*}
\begin{bmatrix}
\left\vert N_{1}(x,y)\right\vert _{0} \\ 
\left\vert N_{2}(x,y)\right\vert _{\theta }%
\end{bmatrix}%
\leq M(\theta )%
\begin{bmatrix}
R_{1} \\ 
R_{2}%
\end{bmatrix}%
+%
\begin{bmatrix}
C_{1} \\ 
C_{2}%
\end{bmatrix}%
,
\end{equation*}%
where $M(\theta )$ is given in (\ref{r0}). For the invariance condition it
suffices to have%
\begin{equation*}
M(\theta )%
\begin{bmatrix}
R_{1} \\ 
R_{2}%
\end{bmatrix}%
+%
\begin{bmatrix}
C_{1} \\ 
C_{2}%
\end{bmatrix}%
\leq 
\begin{bmatrix}
R_{1} \\ 
R_{2}%
\end{bmatrix}%
,
\end{equation*}%
or equivalently%
\begin{equation*}
\begin{bmatrix}
C_{1} \\ 
C_{2}%
\end{bmatrix}%
\leq \left( I-M\left( \theta \right) \right) 
\begin{bmatrix}
R_{1} \\ 
R_{2}%
\end{bmatrix}%
.
\end{equation*}%
From assumption (ii), since the matrix \( M(\theta) \) is convergent to zero, \( (I - M(\theta))^{-1} \) has nonnegative entries. Therefore, the last inequality is equivalent to 
\begin{equation}
(I-M(\theta ))^{-1}%
\begin{bmatrix}
C_{1} \\ 
C_{2}%
\end{bmatrix}%
\leq 
\begin{bmatrix}
R_{1} \\ 
R_{2}%
\end{bmatrix}%
,  \label{r11}
\end{equation}%
which clearly allows us to choose positive numbers $R_{1},R_{2}$ such that
the set $D_{\beta ;R_{1},R_{2}}$ is invariant under the operator $%
(N_{1},N_{2}).$

Note that, as the semigroup generated by $A$ is assumed to be compact, it follows from
standard arguments that $N_{1}$ and $N_{2}$ are completely continuous operators
(see, e.g., \cite{pcarte}). Consequently, Schauder's fixed point theorem
applies and guarantees the existence of at least one fixed point of $%
(N_{1},N_{2})$ in $D_{\beta ;R_{1},R_{2}}$.
\end{proof}

\begin{remark}
From the above result we have not only the existence of a solution, but also
its localization, namely%
\begin{equation}
|x(t)|_{X}\leq R_{1}\ \ \ \text{ and\ \ \ }|y(t)|_{X}\leq e^{\theta t}R_{2}\
\ \text{for all }t\in \lbrack 0,T],  \label{r10}
\end{equation}%
where $\theta $ is given in assumption (ii), and $%
R_{1},R_{2}$ satisfy (\ref{r11}). Note that, due to the use of the Bielecki
norm on the second component, we only have an exponential localization for $%
y $.
\end{remark}

\subsection{Existence via Avramescu's fixed point theorem}

\phantom{}

We apply Avramescu's fixed point theorem to obtain a fixed point for the
operator $(N_{1},N_{2})$. It is noteworthy that, given the fixed point
obtained in Theorem \ref{t33} below, the uniqueness of the second component
is guaranteed by Banach's principle, while both components $x$ and $y$
satisfy (\ref{r10}) with $\theta $ given in assumption (ii) of Theorem \ref%
{t31}, and $R_{1},R_{2}$ satisfy (\ref{r11}).

\begin{theorem}
\label{t33} Assume that there are nonnegative constants $%
a_{11},a_{12},a_{13},a_{21},a_{22}$ such that 
\begin{align}
& |F(x,y)|_{X}\leq a_{11}|x|_{X}+a_{12}|y|_{X}+a_{13},  \label{ac1} \\
& |G(x,y)-G(\overline{x},\overline{y})|_{X}\leq a_{21}|x-\overline{x}%
|_{X}+a_{22}|y-\overline{y}|_{X},  \notag
\end{align}%
for all $x,\overline{x},y,\overline{y}\in X$ and $t\in \left[ 0,T\right] .$
In addition assume that $\left\{ S\left( t\right) ;t\geq 0\right\} $ is a
compact semigroup and condition \emph{(ii)} in Theorem \emph{\ref{t31}}
holds.

Then, for each $\beta \in X,$ problem \emph{(\ref{el})-(\ref{3.1'})} has at
least one solution $\left( x,y\right) $ in $C\left( \left[ 0,T\right]
;X\right) ^{2}$ with $y\left( 0\right) =\beta .$
\end{theorem}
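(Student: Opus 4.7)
The plan is to apply Avramescu's theorem on a product $D_1 \times D_2$ analogous to the set $D_{\beta;R_1,R_2}$ used for Schauder in Theorem \ref{teorema cu Schauder}, but treating the two components asymmetrically: $D_1 := \{x \in C([0,T];X) : |x|_0 \leq R_1\}$ will be regarded as a closed convex subset of the normed space $(C([0,T];X), |\cdot|_0)$, while $D_2 := \{y \in C([0,T];X) : y(0) = \beta,\ |y|_\theta \leq R_2\}$ will be treated as a complete metric space with the metric induced by the Bielecki norm $|\cdot|_\theta$. The radii $R_1, R_2$ will be chosen so that $(N_1, N_2)$ leaves $D_1 \times D_2$ invariant.

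For the invariance, I would first note that the Lipschitz hypothesis on $G$ in (\ref{ac1}) automatically yields the linear growth bound $|G(x,y)|_X \leq a_{21}|x|_X + a_{22}|y|_X + a_{23}$ with $a_{23} := |G(0,0)|_X$. With this in hand, the computations in the proof of Theorem \ref{teorema cu Schauder} reproduce verbatim and give the matrix inequality $[\,|N_1(x,y)|_0,\ |N_2(x,y)|_\theta\,]^\top \leq M(\theta)[R_1,R_2]^\top + [C_1,C_2]^\top$. Since $M(\theta)$ is convergent to zero, $(I - M(\theta))^{-1}$ has nonnegative entries, so the choice $[R_1,R_2]^\top := (I - M(\theta))^{-1}[C_1,C_2]^\top$ secures the invariance; that $N_2(x,y)(0) = \beta$ is immediate from the explicit formula for $N_2$.

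Next I would verify the two hypotheses of Avramescu. For (a), the compactness of the semigroup together with the continuity and linear growth of $F$ yield, by a standard argument (see \cite{pcarte}), that $N_1$ is completely continuous, so $N_1(D_1 \times D_2)$ is relatively compact in $C([0,T];X)$. For (b), the Lipschitz property of $G$ in the second variable gives, exactly as in estimate (\ref{r3}) in the proof of Theorem \ref{t31},
$$|N_2(x,y) - N_2(x,\bar{y})|_\theta \leq C_A a_{22}\,\frac{1 - e^{-\theta T}}{\theta}\,|y - \bar{y}|_\theta,$$
and the contraction constant is precisely the $(2,2)$ entry of $M(\theta)$, which is strictly less than $1$ by Lemma \ref{lema2 matrici convergente la zero} since $M(\theta)$ converges to zero. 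Joint continuity of $N_1$ and $N_2$ on $D_1 \times D_2$ follows from the continuity of $F$ and the Lipschitz bound on $G$.

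Avramescu's theorem then delivers a fixed point $(x,y) \in D_1 \times D_2$ of $(N_1, N_2)$, which by the equivalence lemma of Section 4.1 is a mild solution of (\ref{E1})--(\ref{E3}) with $y(0) = \beta$. The delicate point I anticipate is the \emph{double} role played by $M(\theta)$: the same $\theta$ must both render the invariance inequality solvable \emph{and} keep the $y$-contraction constant below one, and condition (ii) of Theorem \ref{t31} delivers both at once precisely because a matrix convergent to zero has all diagonal entries strictly less than $1$.
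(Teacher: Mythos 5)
Your proposal is correct and follows essentially the same route as the paper's proof: the same sets $D_1$, $D_2$, the same derivation of the growth bound for $G$ with $a_{23}=|G(0,0)|_X$, the same invariance argument via $M(\theta)$ and $(I-M(\theta))^{-1}\geq 0$, the contraction constant for $N_2(x,\cdot)$ identified as the $(2,2)$ entry of $M(\theta)$, and compactness of $N_1$ from the compact semigroup, concluding with Avramescu's theorem.
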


\begin{proof}
Let us consider the sets, 
\begin{align*}
& D_{1}:=\left\{ x\in C([0,T];X)\,:\,|x|_{0}\leq R_{1}\right\} , \\
& D_{2}:=\left\{ y\in C([0,T];X)\,:\,y\left( 0\right) =\beta ,\ |y|_{\theta
}\leq R_{1}\right\} ,
\end{align*}%
where $R_{1},R_{2}$ are positive numbers which are chosen below. We observe
that, from (\ref{ac1}), function $G$ satisfies the growth condition%
\begin{equation*}
|G(x,y)|_{X}\leq a_{21}|x|_{X}+a_{22}|y|_{X}+a_{23},
\end{equation*}%
where $a_{23}=\left\vert G\left( 0,0\right) \right\vert _{X}.$ Consequently,
performing the same estimates as in the proof of Theorem \ref{teorema cu
Schauder} and using (ii), we conclude that%
\begin{equation*}
\left\vert N_{1}(x,y)\right\vert _{0}\leq R_{1},\ \ \ \left\vert
N_{2}(x,y)\right\vert _{\theta }\leq R_{2},
\end{equation*}%
for all $(x,y)\in D_{1}\times D_{2}$, where $R_{1},R_{2}$ are obtained as in
the proof of the previous theorem. This guarantees that $N_{i}\left(
D_{1}\times D_{2}\right) \subset D_{i}$ for $i=1,2.$

Since the matrix $M(\theta )$ converges to zero, the diagonal elements are
strictly less than $1$, hence
\begin{equation*}
\ a_{22}C_{A}\frac{1-e^{-\theta T}}{\theta }<1.
\end{equation*}
This guarantees that $N_{2}(x,\cdot )$ is a contraction for all $x\in
C([0,T];X)$. Indeed, following a similar reasoning as in the proof of
Theorem \ref{t31}, we deduce 
\begin{equation*}
\left\vert N_{2}(x,y)-N_{2}(x,\overline{y})\right\vert _{\theta }\leq
a_{22}C_{A}\frac{1-e^{-\theta T}}{\theta }|y-\overline{y}|_{\theta },
\end{equation*}%
for all $x\in D_{1}$ and $y,\overline{y}\in D_{2}$. In addition, as above, $%
N_{1}\left( D_{1}\times D_{2}\right) $ is relatively compact in $C\left( %
\left[ 0,T\right] ;X\right) .$ Therefore, Avramescu's theorem applies and
guarantees the existence of a pair $(x,y)\in D_{1}\times D_{2}$ such that $%
N_{1}(x,y)=x$ and $N_{2}(x,y)=y$. 
%     Let $\alpha\in X$ be fixed. We aim to apply the Krasnoselskii's fixed point theorem to the operators $T_1:=(N_1,0)$ and $T_2:=(0, N_2)$ on the set $D_{\alpha; R_1,R_2}$, where $D_{\alpha; R_1,R_2}$ is defined in \eqref{definitia D_alpha}. Clearly $T_1$ is completely continuous since $N_1$ is (see Theorem \ref{teorema cu Schauder}). Repeating the same computations as in Theorem \ref{teorema cu Perov}, we obtain
% \begin{align}\label{inegalitatea N2 Kras}
%        \left \vert N_2(x, y)-N_2(\overline{x}, \overline{y})\right \vert_\theta \leq a_{12}|x-\overline{x}|_0+a_{22} |y-\overline{y}|_\theta \frac{1-e^{-\theta T}}{\theta},
% \end{align}
% for all $(x,y), (\overline{x},\overline{y})\in C([0,T];X)\times C\alpha([0,T];X)$,
% where
%     \begin{equation*}
%     a_{21}=c\,T\, C_S, \, a_{22}=d\, C_S.
% \end{equation*}
\end{proof}

\section{Application}

The results established in Section 4 can be easily applied to the diffusion
system, 
\begin{equation}
\begin{cases}
u_{t}=\Delta u+f(u,v) \\ 
v_{t}=\Delta v+g(u,v)\,\text{ in }\Omega , \\ 
u=v=0\,\text{ on }\partial \Omega%
\end{cases}%
(t\in \lbrack 0,T]),  \label{aplicatie}
\end{equation}%
with the controllability condition $\ $%
\begin{equation}
u(T)-au\left( 0\right) =k\left( v(T)-bv\left( 0\right) \right) ,
\label{5.1'}
\end{equation}%
where $a,b,k>0.$

The functions $f,g\colon \mathbb{R}^{2}\rightarrow \mathbb{R}$ are assumed
to be continuous, and there exist nonnegative constants $%
a_{11},a_{12},a_{21},a_{22},c_{f},c_{g}$ such that one of the following
three conditions is satisfied: 
\begin{align*}
& (\text{c}_{1}):%
\begin{cases}
\left\vert f(p,q)-f(\overline{p},\overline{q})\right\vert \leq a_{11}|p-%
\overline{p}|+a_{12}|q-\overline{q}| \\ 
\left\vert g(p,q)-g(\overline{p},\overline{q})\right\vert \leq a_{21}|p-%
\overline{p}|+a_{22}|q-\overline{q}|%
\end{cases}
\\
& (\text{c}_{2}):%
\begin{cases}
\left\vert f(p,q)\right\vert \leq a_{11}|p|+a_{12}|q|+c_{f} \\ 
\left\vert g(p,q)\right\vert \leq a_{21}|p|+a_{22}|q|+c_{g}%
\end{cases}
\\
& (\text{c}_{3}):%
\begin{cases}
\left\vert f(p,q)\right\vert \leq a_{11}|p|+a_{12}|q|+c_{f} \\ 
\left\vert g(p,q)-g(\overline{p},\overline{q})\right\vert \leq a_{21}|p-%
\overline{p}|+a_{22}|q-\overline{q}|%
\end{cases}%
\end{align*}%
for all $p,q,\overline{p},\overline{q}\in \mathbb{R}$.

Here, $\Omega \subset \mathbb{R}^{n}$ is a bounded open set, $X$ is the
Banach space $L^{2}(\Omega )$ endowed with the usual $|\cdot |_{L^{2}}$
norm, $A=\Delta $, with 
\begin{equation*}
D(A)=\left\{ u\in H_{0}^{1}(\Omega )\,:\,\Delta u\in L^{2}(\Omega )\right\} .
\end{equation*}%
Note that $A$ is the infinitesimal generator of a compact semigroup of
contractions in $L^{2}(\Omega )$ (cf, \cite[Theorem~7.2.5]{v}). Hence, the
constant $C_{A}$ given in \eqref{marginire semigrup} is $1.$ Also $F$ and $G$
are the superposition operators $\ F,G:L^{2}\left( \Omega \right)
^{2}\rightarrow L^{2}\left( \Omega \right) ,$ 
\begin{equation*}
F\left( u,v\right) \left( x\right) =f\left( u\left( x\right) ,v\left(
x\right) \right) ,\ \ \ G\left( u,v\right) \left( x\right) =g\left( u\left(
x\right) ,v\left( x\right) \right) \ \ \ \left( x\in \Omega \right) .
\end{equation*}

Simple computations show that if either $($c$_{1})$, $($c$_{2})$, or $($c$%
_{3})$ holds, then assumption (i) of Theorem \ref{t31}, Theorem \ref{teorema
cu Schauder}, or Theorem \ref{t33} is satisfied with $a_{13}=c_{f}m(\Omega
)^{1/2},\ a_{23}=c_{g}m(\Omega )^{1/2}$ and the constants $%
a_{11},a_{12},a_{21},a_{22}$.

Additionally, if we assume there exists $\theta \geq 0$ such that the matrix 
\begin{equation*}
M(\theta ):=%
\begin{bmatrix}
\frac{1}{a}+\left( 1+\frac{1}{a}\right) Ta_{11}+\frac{k}{a}Ta_{21} & \left(
\left( 1+\frac{1}{a}\right) a_{12}+\frac{k}{a}a_{22}\right) \frac{e^{\theta
T}-1}{\theta } \\[3pt] 
Ta_{21} & a_{22}\,\frac{1-e^{-\theta T}}{\theta }%
\end{bmatrix}%
\end{equation*}%
converges to zero, then condition (ii) of the Theorem \ref{t31} is verified.
Consequently, depending on the conditions imposed ((c$_{1}$), (c$_{2}$), or
(c$_{3}$)), there exists a weakly solution for the mutual control problem %
\eqref{aplicatie}-(\ref{5.1'}), which is either unique; localized; or unique
in one component and localized in both.


\begin{thebibliography}{99}
\bibitem{a} C. Avramescu, \textit{Asupra unei teoreme de punct fix }(in
Romanian), St. Cerc. Mat. \textbf{22 }(2) (1970), 215--221.

\bibitem{bg} M. Beldinski and M. Galewski, \textit{Nash type equilibria for
systems of non-potential equations}, Appl. Math. Comput. \textbf{385}
(2020), pp. 125456.

\bibitem{c} M. Coron, \textit{Control and Nonlinearity}, AMS, Providence,
2007.

\bibitem{d} K. Deimling, \textit{Nonlinear Functional Analysis}, Springer,
Berlin, 1985.

\bibitem{g} A. Granas, \textit{Fixed Point Theory}, Springer, New York, 2003.

\bibitem{kr} M.A. Krasnoselskii, \textit{Some problems of nonlinear analysis}%
, Amer. Math. Soc. Transl. \textbf{10} (1958), 345--409.

\bibitem{mr} P. Magal and S. Ruan, \textit{Theory and Applications of
Abstract Semilinear Cauchy Problems}, Springer, 2018.

\bibitem{park} S. Park, \textit{Generalizations of the Nash equilibrium
theorem in the KKM theory}, Fixed Point Theor. Appl. \textbf{2010} (2010).

\bibitem{pmcz} R. Precup, \textit{The role of matrices that are convergent
to zero in the study of semilinear operator systems}, Math. Comput. Model. 
\textbf{49} (2009), 703--708.

\bibitem{p1} R. Precup, \textit{Nash-type equilibria and periodic solutions
to nonvariational systems}, Adv. Nonlinear Anal. \textbf{3} (2014), no. 4,
197--207.

\bibitem{p2} R. Precup, \textit{A critical point theorem in bounded convex
sets and localization of Nash-type equilibria of nonvariational systems}, J.
Math. Anal. Appl. \textbf{463} (2018), 412--431.

\bibitem{pn} R. Precup, \textit{On some applications of the controllability
principle for fixed point equations}, Results Appl. Math. \textbf{13 }
(2022) 100236, 1--7.

\bibitem{pcarte} R. Precup, \textit{Methods in Nonlinear Integral Equations}%
, Dordrecht, Springer, 2002.

\bibitem{ps} R. Precup and A. Stan, \textit{Stationary Kirchhoff equations
and systems with reaction terms}, AIMS Mathematics, \textbf{7} (2022), Issue
8, 15258--15281.

\bibitem{ps2} R. Precup and A. Stan, \textit{Linking methods for
componentwise variational systems}, Results Math. \textbf{78} (2023), 1-25.

\bibitem{ps3} R. Precup and A. Stan, \textit{A mutual control problem for
semilinear systems via fixed point approach, }submitted.

\bibitem{s} A. Stan, \textit{Nonlinear systems with a partial Nash type
equilibrium}, Studia Univ. Babe\c{s}-Bolyai Math. \textbf{66} (2021),
397--408.

\bibitem{s2} A. Stan, \textit{Nash equilibria for componentwise variational
systems}, J. Nonlinear Funct. Anal. \textbf{6} (2023), 1-10.

\bibitem{s3} A. Stan, \textit{Localization of Nash-type equilibria for
systems with partial variational structure}, J. Numer. Anal. Approx. Theory, 
\textbf{52} (2023) , 253--272.

\bibitem{v} I.I. Vrabie, $C_{0}$\textit{-Semigroups and Applications },
Elsevier, Amsterdam, 2003.

\bibitem{z} J. Zabczyk, \textit{Mathematical Control Theory}, Springer,
Cham, 2020.
\end{thebibliography}
\end{document}